\newtheorem{theorem}{\indent Theorem}
\newtheorem{lemma}{\indent Lemma}
\newtheorem{remark}{\indent Remark}
\newtheorem{corollary}{\indent Corollary}
\def\ve{\varepsilon}
\def\vk{\varkappa}
\def\vp{\varphi}
\def\leq{\leqslant}
\def\geq{\geqslant}
\def\*#1{\mathbf{#1}}
\begin{document}
\centerline{\large\textbf{Upper and Lower Error Bounds}} 
\centerline{\large\textbf{for a Compact Fourth-Order Finite-Difference Scheme}}
\centerline{\large\textbf{for the 
Wave Equation with Nonsmooth Data}
}
\bigskip\centerline{A. Zlotnik}
\smallskip
\normalsize
\centerline{\textit{
Higher School of Economics University, Moscow, 109028 Pokrovskii bd. 11 Russia}}
\smallskip
\centerline{\it e-mail: azlotnik@hse.ru}
\smallskip

\begin{abstract}
\noindent A compact three-level fourth-order finite-difference scheme for solving the 1d wave equation is studied.
New error bounds of the fractional order $\mathcal{O}(h^{4(\lambda-1)/5})$ are proved in the mesh energy norm in terms of data, for two initial functions from the Sobolev and Nikolskii spaces with the smoothness orders $\lambda$ and $\lambda-1$ and the free term with a dominated mixed smoothness of order $\lambda-1$, for  $1\leq\lambda\leq 6$. 
The corresponding lower error bounds are proved as well to ensure the sharpness in order of the above error bounds with respect to each of the initial functions and the free term for any $\lambda$. Moreover, they demonstrate that the upper error bounds cannot be improved if the Lebesgue summability
indices in the error norm are weakened down to 1 both in $x$ and $t$ and simultaneously
the summability indices in the norms of data are strengthened up to $\infty$ both in $x$ and $t$. Numerical experiments confirming the sharpness of the mentioned orders for half-integer $\lambda$ and piecewise polynomial data have already been carried out previously. 

\medskip\noindent {\bf Keywords:} wave equation, nonsmooth data, compact fourth-order scheme, error bounds, lower error bounds. 
\end{abstract}

\section{\normalsize Introduction}

Compact finite-difference schemes of the fourth approximation order ${\mathcal O}(h^4+\tau^4)$ combine high accuracy of results with simplicity of implementation and therefore serve as an important tool to solve numerically various partial differential equations. 
Extensive literature is devoted to the construction, stability analysis, fourth-order error bounds for smooth data and solutions and computer application of such schemes. In the case of the wave equation and its linear  generalizations, in particular, see \cite{V73}-\cite{ZL25} (this list is far from being complete) and a lot of references therein.
However, theoretical analysis of the error of these schemes in the case of non-smooth initial data and the free term of the wave-type equations is complicated and almost absent, although in this case, the fourth order schemes also have confirmed advantages in the practical error behavior over the second order schemes.
 
\par In this paper, we study in detail the error of the known implicit three-level in time compact fourth-order finite-difference scheme  for the initial-boundary value problem for the 1d wave equation.
The essential point is the application of the two-level implicit approximation of the second initial condition $\partial_tu|_{t=0}=u_1$,  without the derivatives of the data of the problem, in contrast to known explicit 5th order approximation of the solution at the first time level involving such derivatives.
New error bounds of the fractional order $\mathcal{O}(h^{4(\lambda-1)/5})$, with $h$ being the mesh step in $x$, are proved in the uniform in time mesh energy norm in terms of the data, rather than under assumptions on the exact solution, for the two initial functions $u_0$ and $u_1$ from the Sobolev spaces (for integer $\lambda$) and Nikolskii spaces (for noninteger $\lambda$) with the smoothness orders $\lambda$ and $\lambda-1$ and the free term $f$ with a dominated mixed smoothness of order $\lambda-1$, for  $1\leq\lambda\leq 6$.
In particular, the difficult case of the piecewise smooth $u_0$ and discontinuous $u_1$ is covered for $\lambda=\frac32$, and the case of the piecewise smooth $\partial_xu_0$ and piecewise smooth $u_1$ is covered for $\lambda=\frac52$.
Importantly, for any $\lambda>1$, the above error bounds are better in order than the corresponding error bounds $\mathcal{O}((h+\tau)^{2(\lambda-1)/3})$, $1\leq\lambda\leq 4$, for the second order finite element method (FEM) with a weight in \cite{Z94}.
Such an effect is absent for the elliptic and parabolic equations, in particular, see \cite{BS08}, but is known for the time-dependent Schr\"{o}dinger equation \cite{BTW75}.
Recall that the cases of discontinuous or nonsmooth data are of interest, in particular, for the corresponding optimal control problems, for example, see \cite{TVZ18,EVT21}.

\par The proof is based on the interpolation of the error operator using the uniform in time energy bounds for the exact and approximate solutions using the weakest data norms and the error bound $\mathcal{O}(h^4)$ under the minimal Sobolev regularity of the exact solution followed by its expression in terms of the data. 
In both the results for the compact scheme, its close connection to the bilinear FEM with a special weight plays an important role.
The proof defers essentially from the corresponding studies of the error of finite-difference schemes in \cite{JS14} and lead to the better results, although in the special case $u_1=0$ and $f=0$, the similar error bound in a weaker mesh norm and under stronger assumptions on $u_0$ was proved in \cite{J94}. 

\par The corresponding lower error bounds for the difference quotient in $x$ of the error and the error itself are derived as well ensuring the sharpness in order of the above error bounds with respect to each of the initial functions and the free term for each $\lambda$. 
Moreover, they demonstrate that the upper error bounds cannot be improved if the Lebesgue summability indices in the error norm are weakened down to 1 both in $x$ and $t$ and simultaneously the summability indices in the norms of the data are strengthened up to $\infty$ both in $x$ and $t$.

\par The proof of the lower error bounds exploits the explicit Fourier-type formulas for the exact and approximate solutions for the harmonic initial data $u_0$ and $u_1$ and the harmonic both in $x$ and $t$ free term $f$ accompanied by a careful choice of the harmonic frequencies in dependence with $h$. 
These lower bounds go back to the lower error bounds for the second order FEM in \cite[Theorem 2]{Z79} whose proof was given in \cite{Z79diss}; also the modified versions of these bounds have recently been presented in \cite{Z25}.  
Later the lower error bounds for the error of the semi-discrete (continuous in time) second and higher order FEMs were given in \cite{R85,H96}.

\par Notice that numerical experiments confirming the sharpness of the proved error bound for half-integer $\lambda$ and piecewise polynomial data have already been carried out previously in \cite{ZK21}.

\par The paper is organized as follows. 
In Section \ref{sec: ibvp and scheme}, we state the initial-boundary value problem for the 1d wave equation and the compact
fourth-order scheme to solve it. We also prove for the scheme the stability theorem in the mesh energy norm in the form required in this paper.
In Section \ref{sec:4th order error}, we derive the 4th order error bound under the minimal 6th order Sobolev regularity assumptions on the exact solution. 
In Section \ref{sec:fract order bounds}, we obtain fractional-order error bounds in terms of data using the results of two previous sections and the interpolation of the error operator.
The last Section \ref{sec:lower bounds} is devoted to the derivation of the corresponding lower error bounds based on the Fourier representations of the exact and approximate solutions.

\section{\normalsize An initial-boundary value problem for the 1d wave equation and the compact fourth-order scheme to solve it}
\label{sec: ibvp and scheme} 
\setcounter{equation}{0}
\setcounter{lemma}{0}
\setcounter{theorem}{0}
\setcounter{remark}{0}
\setcounter{corollary}{0}

We deal with the initial-boundary value problem (IBVP) for the 1d wave equation
\begin{gather}
 \partial_t^2u-a^2\partial_x^2u=f(x,t),\ \ (x,t)\in Q=Q_T=I_X\times I_T,
\label{eq}\\[1mm]
u|_{x=0,X}=0,\ \ u|_{t=0}=u_0(x),\ \ 
\partial_tu|_{t=0}=u_1(x),\ \  x\in I_X
\label{bq}
\end{gather}
under the homogeneous Dirichlet boundary condition.
Here $a=\rm{const}>0$, $I=I_X=(0,X)$ and $I_T=(0,T)$.
Define the collection of data $\*d=(u_0,u_1,f)$.

\par For $u_0\in H_0^1(I)$, $u_1\in L^2(I)$ and $f\in L^{2,1}(Q)$, this problem has a unique {\it weak solution from the energy class (space)} having the properties $u\in C(\bar{I}_T;H_0^1(I))$ and $\partial_tu\in C(\bar{I}_T;L^2(I))$, and the energy bound holds
\begin{gather}
\max_{0\leq t\leq T}\big(\|\partial_tu(\cdot,t)\|_{L^2(I)}^2+a^2\|u(\cdot,t)\|_{H_0^1(I)}^2\big)^{1/2}
\leq\big(a^2\|u_0\|_{H_0^1(I)}^2+\|u_1\|_{L^2(I)}^2\big)^{1/2}+2\|f\|_{L^{2,1}(Q)},
\label{en bound u}
\end{gather}
see, for example, \cite{L73,DL00} (with an addition concerning $f$ in \cite{Z94}). 
Hereafter $H_0^1(I)=\{w\in W^{1,2}(I);\,w|_{x=0,X}=0\}$ is a Sobolev subspace, $L^{2,q}(Q)$ is the anisotropic Lebesgue space and $C(\bar{I}_T;H)$ is the space of continuous functions on $\bar{I}_T$ with the values in a Hilbert space $H$, endowed with the norms
\[
\|w\|_{H_0^1(I)}=\|\partial_xw\|_{L^2(I)},\ \|f\|_{L^{2,q}(Q)}=\|\|f\|_{L^2(I)}\|_{L^q(I_T)},\, 1\leq q\leq\infty,\
\|u\|_{C(\bar{I}_T;\,H)}=\max_{0\leq t\leq T}\|u(\cdot,t)\|_H.
\]
Since $C(\bar{I}_T;H_0^1(I))\subset C(\bar{Q})$, such solutions $u$ are continuous in $\bar{Q}$.

\par Let $\bar{\omega}_h$ and $\bar{\omega}^\tau$ be the uniform meshes with the nodes $x_i=ih$, $0\leq i\leq N$, in $\bar{I}_X$ and $t_m=m\tau$, $0\leq m\leq M$, in $\bar{I}_T$, with the steps $h=X/N$ and $\tau=T/M$.
Let $\omega_h=\bar{\omega}_h\setminus\{0,X\}$, $\omega^\tau=\bar{\omega}^\tau\setminus\{0,T\}$ and 
$\bar{\omega}_{\*h}=\bar{\omega}_h\times \bar{\omega}^\tau$ with $\*h:=(h,\tau)$.
Let $w_i=w(x_i)$, $y^m=y(t_m)$ and  $v_i^m=v(x_i,t_m)$.

\par We introduce the mesh averages and finite-difference operators in $x$ and $t$:
\begin{gather*}
s_Nw_i=\frac{1}{12}(w_{i-1}+10w_i+w_{i+1}),\ \ 
Bw_i=\frac{1}{6}(w_{i-1}+4w_i+w_{i+1}),\ \ 
\Lambda_xw_i=\frac{w_{i-1}-2w_i+w_{i+1}}{h^2},
\\[1mm]
\bar{s}_ty=\frac{y+\check{y}}{2},\ \
s_{Nt}y=\frac{1}{12}(\hat{y}+10y+\check{y}),\ \
\delta_t y=\frac{\hat{y}-y}{\tau},\ \
\bar{\delta}_ty=\frac{y-\check{y}}{\tau},\ \ \Lambda_ty=\frac{\hat{y}-2y+\check{y}}{\tau^2},
\end{gather*}
where $\check{y}^m=y^{m-1}$, $\hat{y}^m=y^{m+1}$.
The formulas hold 
\[
s_N=\mathbf{I}+\tfrac{1}{12}h^2\Lambda_x,\ \ 
B=\mathbf{I}+\tfrac{1}{6}h^2\Lambda_x,\ \
s_{Nt}=\mathbf{I}+\tfrac{1}{12}\tau^2\Lambda_t,\ \
\Lambda_t=\delta_t\bar{\delta}_t, 
\]
where $\mathbf{I}$ is the identity operator.
Here $s_N$ and $s_{Nt}$ are the Numerov averaging operators in $x$ and $t$, and $B$ is the averaging operator (the scaled mass matrix) corresponding to the linear finite elements. 

\par We introduce the Euclidean space $H_h$ of functions given on $\bar{\omega}_h$ and equal 0 at $x_i=0,X$, endowed with the inner product 
\[
(v,w)_h=\sum_{i=1}^{N-1}v_iw_ih.
\]
For a linear operator $A=A^*>0$ acting in $H_h$, we also use the norm $\|w\|_{A}=(Aw,w)_h^{1/2}$ in $H_h$. 
Let $\|A\|$ be its norm. 
The operators $s_N$, $B$ and $\Lambda_x$ can be considered as acting in $H_h$ after setting $(s_Nw)|_{i=0,N}=(Bw)|_{i=0,N}=(\Lambda_xw)|_{i=0,N}=0$.
It is well-known that then
\begin{gather}
\tfrac23\mathbf{I}\leq s_N=s_N^*\leq \mathbf{I},\ \ \tfrac13\mathbf{I}\leq B=B^*\leq \mathbf{I}, \  0<-\Lambda_x=-\Lambda_x^*\leq\tfrac{4}{h^2}\mathbf{I},
\label{oper ineq}\\
\|w\|_{-\Lambda_x}^2=(-\Lambda_xw,w)
=\|\bar{\delta}_xw\|_{h*}^2:=\sum_{i=1}^N(\bar{\delta}_xw_i)^2h\,\ \text{for}\,\ w\in H_h,
\ \ \text{with}\ \ \bar{\delta}_xw_i=\frac{w_i-w_{i-1}}{h}.
\nonumber
\end{gather}

\par To solve the IBVP \eqref{eq}--\eqref{bq}, we consider the known three-level fourth-order compact scheme 
\begin{gather}
\big(s_N-\tfrac{1}{12}\tau^2a^2\Lambda_x\big)\Lambda_t v-a^2\Lambda_xv=f_{\*h}\ \ \text{on}\ \ \omega_h\times\omega^\tau,
\label{eqht}\\
\big(s_N-\tfrac{1}{12}\tau^2a^2\Lambda_x\big)\delta_t v^0-\tfrac{\tau}{2}a^2\Lambda_xv^0=u_{1\*h}+\tfrac{\tau}{2}f_{\*h}^0\ \ \text{on}\ \ \omega_h,
\label{inht1}\\
 v^0=u_0\ \ \text{on}\ \ \bar{\omega}_h,\ \
v_i^m|_{i=0,N}=0,\ \
1\leq m\leq M,
\label{bqht}
\end{gather}  
in particular, see \cite{V73,ZK21}.
The usage of the two-level implicit initial condition  \eqref{inht1} for equation  \eqref{eqht} was suggested in \cite{ZK21}, and this is essential for non-smooth data since it does not involve derivatives of the data in contrast to an alternative explicit formula for $v^m|_{m=1}$ having the 5th truncation order.
The scheme has the 4th truncation order for smooth solutions $u$ under the proper choice of $f_{\*h}$, $f_{\*h}^0$ and $u_{1\*h}$.
The standard choice of $f_{\*h}$ is $f_{\*h}=f+\tfrac{h^2}{12}\Lambda_xf+\tfrac{\tau^2}{12}\Lambda_tf$.
It follows from \cite{ZK21} that, for example, one can also set
\begin{gather*}
f_{\*h}=q_hq_\tau f\ \ \text{on}\ \ \omega_h\times\omega^\tau,
\\
u_{1\*h}=u_{1\*h}^{(0)}:=s_Nu_1+\tfrac{\tau^2}{12}a^2\Lambda_xu_1\ \ \text{or}\ \ 
u_{1\*h}= u_{1\*h}^{(1)}:=q_hu_1+\tfrac{\tau^2}{12}a^2\Lambda_xu_1;
\end{gather*}  
the form of $f_{\*h}^0$ from \cite{ZK21} is omitted.
The choice $f_{\*h}^m=(q_hq_\tau f)^m$,  $0\leq m\leq M-1$, is suitable for $f\in L^{2,1}(Q)$, and we use namely it below. 
But both the formulas for $u_{1\*h}$ are not suitable for $u_1\in L^2(I)$, and therefore another formula is suggested and applied below. 
\par Hereafter, for $w\in L^1(I)$ and $z\in L^1(I_T)$ the following averages are applied
\begin{gather*}
 q_{0h}w_i=\frac{1}{2h}\int_{x_{i-1}}^{x_{i+1}}w(x)\,dx,\ \
 q_{0\tau}z^0=\frac{1}{\tau}\int_0^\tau z(t)\,dt,\ \
 q_{0\tau}z^m=\frac{1}{2\tau}\int_{t_{m-1}}^{t_{m+1}} z(t)\,dt,
\\
q_hw_i=\frac{1}{h}\int_{x_{i-1}}^{x_{i+1}}w(x)e_i^h(x)\,dx,\ \
 q_\tau z^0=\frac{2}{\tau}\int_0^\tau z(t)e^{\tau,0}(t)\,dt,\ \
q_\tau z^m=\frac{1}{\tau}\int_{t_{m-1}}^{t_{m+1}} z(t)e^{\tau,m}(t)\,dt,
\label{avertau}
\end{gather*}
where $1\leq i\leq N-1$, $1\leq m\leq M-1$, and the well-known ``hat'' functions 
\[
e_i^h(x)=\max\big\{1-\big|\tfrac{x}{h}-i\big|,0\big\},\ \ 
e^{\tau,m}(t)=\max\big\{1-\big|\tfrac{t}{\tau}-m\big|,0\big\}
\]
from bases in the spaces of piecewise linear finite elements in $x$ and $t$ are applied.

\par We pass to a theorem on conditional stability of the introduced compact scheme. The known stability results, including that from \cite{ZK21}, are not fully suitable for our purposes, and we need another one. Define the quantity
\[
\|\{\check{v},v\}\|_{E_{\*h}}=\big(\|\bar{\delta}_tv\|_{B}^2
+(\sigma_N-\tfrac14)a^2\|\bar{\delta}_tv\|_{-\Lambda_x}^2
+a^2\|\bar{s}_tv\|_{-\Lambda_x}^2\big)^{1/2},\ \ \text{with}\ \ \sigma_N:=\tfrac{1}{12}\big(1+\tfrac{h^2}{a^2\tau^2}\big).
\]
It serves as the level energy norm for this scheme and is well defined under the stability condition
\begin{gather}
a^2\tau^2\leq (1-\tfrac{\ve_0^2}{2})h^2 
\label{stab cond 1}
\end{gather}
with some  $0<\ve_0\leq 1$. As it is shown below, the condition guarantees the lower bound 
\begin{gather}
\ve_0^2\|\bar{\delta}_tv\|_B^2+a^2\|\bar{s}_tv\|_{-\Lambda_x}^2\leq\|\{\check{v},v\}\|_{E_{\*h}}^2\ \ \text{for any}\ \ \check{v},v\in H_h.  
\label{lower bound en norm 1} 
\end{gather}

We also introduce the norm $\|F\|_{L_{\*h}^{2,1}}=\tau\sum_{m=1}^{M-1}\|F^m\|_h$ for functions $F$ given on $\omega_h\times\omega^\tau$.
\begin{theorem}
\label{theo:stability}
Under the stability condition \eqref{stab cond 1}, for the compact scheme \eqref{eqht}--\eqref{bqht}, the energy bound holds
\begin{gather}
\max_{1\leq m\leq M}\|\{\check{v},v\}^m\|_{E_{\*h}}
\nonumber\\
\leq\big(a^2\|(-\Lambda_x)^{1/2}v^0\|_h^2+\ve_0^{-2}\|B^{-1/2}u_{1\*h}\|_h^2\big)^{1/2}
+\ve_0^{-1}\big(\|B^{-1/2}f_{\*h}^0\|_h\tau+2\|B^{-1/2}f_{\*h}\|_{L_{\*h}^{2,1}}\big).
\label{stab bound}
\end{gather}
Here the initial data $v^0$ and $u_{1\*h}$ and the free term $f_{\*h}$ are arbitrary.
\end{theorem}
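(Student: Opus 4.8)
The plan is to prove the estimate by the discrete energy method, in four steps: establish the lower bound \eqref{lower bound en norm 1}; derive an exact energy identity for the three-level equation \eqref{eqht} by a central-difference test function; handle the two-level starting relation \eqref{inht1} separately; and glue the pieces together by telescoping summation. For the lower bound \eqref{lower bound en norm 1}, after cancelling the common term $a^2\|\bar s_tv\|_{-\Lambda_x}^2$ it suffices to show that the self-adjoint operator formed from the first two terms of $\|\cdot\|_{E_{\*h}}^2$ with the factor $\ve_0^2$ extracted from the $B$-term is nonnegative on $H_h$. Expressing it through $\mathbf I$ and $-\Lambda_x$ by means of $s_N=\mathbf I+\tfrac1{12}h^2\Lambda_x$, $B=\mathbf I+\tfrac16h^2\Lambda_x$, and then using $0<-\Lambda_x\leq\tfrac4{h^2}\mathbf I$ from \eqref{oper ineq} to bound below the (possibly negative) coefficient of $-\Lambda_x$, this operator turns out to be bounded from below by $\tfrac13\bigl(2-\ve_0^2-\tfrac{2a^2\tau^2}{h^2}\bigr)\mathbf I$, which is $\geq0$ precisely because of the stability condition \eqref{stab cond 1}. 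Two consequences will be used repeatedly: $\ve_0\|\bar\delta_tv\|_B\leq\|\{\check v,v\}\|_{E_{\*h}}$ and $a\|\bar s_tv\|_{-\Lambda_x}\leq\|\{\check v,v\}\|_{E_{\*h}}$.

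For the energy identity I would multiply \eqref{eqht} at a level $t_m$, $1\leq m\leq M-1$, by $v^{m+1}-v^{m-1}=\tau(\delta_tv^m+\bar\delta_tv^m)$ in $(\cdot,\cdot)_h$. Using $\tau^2\Lambda_tv^m=v^{m+1}-2v^m+v^{m-1}$, the self-adjointness of $\mathcal A:=s_N-\tfrac1{12}\tau^2a^2\Lambda_x$ and of $\Lambda_x$, the elementary identities $(\mathcal A\Lambda_tv^m,v^{m+1}-v^{m-1})_h=\|\delta_tv^m\|_{\mathcal A}^2-\|\bar\delta_tv^m\|_{\mathcal A}^2$ and $(-\Lambda_xv^m,v^{m+1}-v^{m-1})_h=\|\bar s_tv^{m+1}\|_{-\Lambda_x}^2-\|\bar s_tv^m\|_{-\Lambda_x}^2-\tfrac{\tau^2}4\bigl(\|\delta_tv^m\|_{-\Lambda_x}^2-\|\bar\delta_tv^m\|_{-\Lambda_x}^2\bigr)$, and collecting terms with the help of the operator relation $\mathcal A=B+\tfrac1{12}(h^2+a^2\tau^2)(-\Lambda_x)$ and the value of $\sigma_N$, one arrives at the exact telescoping identity $\|\{\check v,v\}^{m+1}\|_{E_{\*h}}^2-\|\{\check v,v\}^m\|_{E_{\*h}}^2=(f_{\*h}^m,v^{m+1}-v^{m-1})_h$. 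Summing it over $m=1,\dots,k-1$, splitting the right-hand side as $\tau(f_{\*h}^m,\bar\delta_tv^{m+1})_h+\tau(f_{\*h}^m,\bar\delta_tv^m)_h$ (recall $\bar\delta_tv^{m+1}=\delta_tv^m$) and bounding each term by $\tau\|B^{-1/2}f_{\*h}^m\|_h\cdot\ve_0^{-1}\max_{1\leq j\leq M}\|\{\check v,v\}^j\|_{E_{\*h}}$ via the Cauchy--Schwarz inequality in the $B$-inner product and the lower bound, one gets $\|\{\check v,v\}^k\|_{E_{\*h}}^2\leq\|\{\check v,v\}^1\|_{E_{\*h}}^2+2\ve_0^{-1}\|B^{-1/2}f_{\*h}\|_{L_{\*h}^{2,1}}\max_{1\leq j\leq M}\|\{\check v,v\}^j\|_{E_{\*h}}$, whence by the elementary implication ``$y^2\leq c+by\Rightarrow y\leq\sqrt c+b$'' (for $b,c\geq0$) one obtains $\max_m\|\{\check v,v\}^m\|_{E_{\*h}}\leq\|\{\check v,v\}^1\|_{E_{\*h}}+2\ve_0^{-1}\|B^{-1/2}f_{\*h}\|_{L_{\*h}^{2,1}}$.

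It remains to estimate $\|\{\check v,v\}^1\|_{E_{\*h}}$. Since $v^0$ is prescribed, \eqref{inht1} determines $\delta_tv^0$; substituting $v^1=v^0+\tau\delta_tv^0$, hence $\bar s_tv^1=v^0+\tfrac\tau2\delta_tv^0$, into $\|\{\check v,v\}^1\|_{E_{\*h}}^2$ and using \eqref{inht1} once more to evaluate $\|\delta_tv^0\|_{\mathcal A}^2=(\mathcal A\delta_tv^0,\delta_tv^0)_h$, the cross term $\tfrac\tau2a^2(-\Lambda_xv^0,\delta_tv^0)_h$ that appears recombines with $a^2\|v^0\|_{-\Lambda_x}^2$ exactly into $a^2(-\Lambda_xv^0,\bar s_tv^1)_h$, and one obtains the identity $\|\{\check v,v\}^1\|_{E_{\*h}}^2=a^2(-\Lambda_xv^0,\bar s_tv^1)_h+(u_{1\*h},\delta_tv^0)_h+\tfrac\tau2(f_{\*h}^0,\delta_tv^0)_h$. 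Estimating the first two terms by a two-dimensional Cauchy--Schwarz inequality, $a^2(-\Lambda_xv^0,\bar s_tv^1)_h+(u_{1\*h},\delta_tv^0)_h\leq\bigl(a^2\|(-\Lambda_x)^{1/2}v^0\|_h^2+\ve_0^{-2}\|B^{-1/2}u_{1\*h}\|_h^2\bigr)^{1/2}\bigl(a^2\|\bar s_tv^1\|_{-\Lambda_x}^2+\ve_0^2\|\bar\delta_tv^1\|_B^2\bigr)^{1/2}$ (with $\bar\delta_tv^1=\delta_tv^0$), together with $(f_{\*h}^0,\delta_tv^0)_h\leq\|B^{-1/2}f_{\*h}^0\|_h\|\bar\delta_tv^1\|_B$, and invoking the two consequences of the lower bound to bound the second Cauchy--Schwarz factor by $\|\{\check v,v\}^1\|_{E_{\*h}}$ and $\|\bar\delta_tv^1\|_B$ by $\ve_0^{-1}\|\{\check v,v\}^1\|_{E_{\*h}}$, after cancelling one factor $\|\{\check v,v\}^1\|_{E_{\*h}}$ one obtains $\|\{\check v,v\}^1\|_{E_{\*h}}\leq\bigl(a^2\|(-\Lambda_x)^{1/2}v^0\|_h^2+\ve_0^{-2}\|B^{-1/2}u_{1\*h}\|_h^2\bigr)^{1/2}+\tfrac12\ve_0^{-1}\|B^{-1/2}f_{\*h}^0\|_h\tau$. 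Combined with the summation bound and $\tfrac12\leq1$, this is exactly \eqref{stab bound}.

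The algebraic identities and the Cauchy--Schwarz bookkeeping are routine; everything else reduces to \eqref{oper ineq} and the stability condition. The two points that need care are: checking that the quantity produced by the central-difference multiplier is \emph{exactly} $\|\{\check v,v\}\|_{E_{\*h}}^2$ — this is what pins down the particular value of $\sigma_N$ and relies on $\mathcal A=B+\tfrac1{12}(h^2+a^2\tau^2)(-\Lambda_x)$ — and handling the first level without spoiling the sharp constant $1$ in front of $a^2\|(-\Lambda_x)^{1/2}v^0\|_h^2$, for which the cross term $\tfrac\tau2a^2(-\Lambda_xv^0,\delta_tv^0)_h$ must be absorbed by the rewriting into $a^2(-\Lambda_xv^0,\bar s_tv^1)_h$ followed by a single two-dimensional Cauchy--Schwarz step tied to \eqref{lower bound en norm 1}, rather than split off by a crude Young-type inequality.
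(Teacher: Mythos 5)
Your proof is correct, but it follows a genuinely different route from the paper's, which is essentially a reduction plus two citations: the paper rewrites \eqref{eqht}--\eqref{inht1} via $s_N=B-\frac{h^2}{12}\Lambda_x$ as the three-level scheme with the weight $\sigma_N$ (equations \eqref{eqht fem}--\eqref{inht1 fem}), obtains the lower bound \eqref{lower bound en norm 1} from the condition $\sigma_N\geq\frac14-\frac{1-\ve_0^2}{\tau^2\alpha_h^2}$ quoted from \cite[Section 2.2]{Z94} with $\alpha_h^2=12a^2/h^2$, and then invokes the abstract stability theorem \cite[Theorem 1]{ZK21} for three-level methods with a weight to conclude \eqref{stab bound}. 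You instead reconstruct the entire energy argument from scratch: a direct operator computation for \eqref{lower bound en norm 1} (your reduction to $\tfrac13\bigl(2-\ve_0^2-2a^2\tau^2/h^2\bigr)\mathbf{I}\geq 0$ is exactly what the stability condition \eqref{stab cond 1} delivers), the exact telescoping identity obtained by testing \eqref{eqht} with $v^{m+1}-v^{m-1}$ using $\mathcal{A}=B+\sigma_N\tau^2a^2(-\Lambda_x)$, and a separate treatment of the first level via \eqref{inht1} in which the cross term is recombined into $a^2(-\Lambda_xv^0,\bar{s}_tv^1)_h$ and absorbed by a single two-dimensional Cauchy--Schwarz step tied to \eqref{lower bound en norm 1}. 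I checked the algebra: both the first-level identity $\|\{\check v,v\}^1\|_{E_{\*h}}^2=a^2(-\Lambda_xv^0,\bar{s}_tv^1)_h+(u_{1\*h},\delta_tv^0)_h+\frac{\tau}{2}(f_{\*h}^0,\delta_tv^0)_h$ and the telescoping identity hold, provided the energy norm is read --- as you implicitly do, and as consistency with \eqref{lower bound en norm 1} and the inequality quoted from \cite{Z94} requires --- with the factor $\tau^2$ in the middle term, i.e.\ $(\sigma_N-\tfrac14)\tau^2a^2\|\bar{\delta}_tv\|_{-\Lambda_x}^2$; the paper's displayed definition omits this $\tau^2$, evidently a misprint, so your reading is the intended one. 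What the paper's route buys is brevity and the explicit identification of the compact scheme with the bilinear FEM with weight $\sigma=\sigma_N$, which is reused later (notably for Lemma \ref{lemma41}). What your route buys is a self-contained proof that does not presuppose \cite[Theorem 1]{ZK21}, and in fact a marginally sharper constant, $\tfrac12\ve_0^{-1}\tau\|B^{-1/2}f_{\*h}^0\|_h$ in place of $\ve_0^{-1}\tau\|B^{-1/2}f_{\*h}^0\|_h$, in \eqref{stab bound}.
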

\begin{proof}
We rewrite equations \eqref{eqht}--\eqref{inht1} in virtue of the formula $s_N=B-\frac{h^2}{12}\Lambda_x$ in the form
\begin{gather}
\big(B-\sigma_N\tau^2a^2\Lambda_x\big)\Lambda_t v-a^2\Lambda_xv=f_{\*h}\ \ \text{on}\ \ \omega_h\times\omega^\tau,
\label{eqht fem}\\
\big(B-\sigma_N\tau^2a^2\Lambda_x\big)\delta_t v^0-\tfrac{\tau}{2}a^2\Lambda_xv^0=u_{1\*h}+\tfrac{\tau}{2}f_{\*h}^0\ \ \text{on}\ \ \omega_h.
\label{inht1 fem}
\end{gather}
Importantly, these equations for $f_{\*h}=q_hq_\tau f$ and $u_{1\*h}=q_hu_1$ coincide with the equations of the bilinear FEM with the special weight $\sigma=\sigma_N$, see  \cite{Z79,Z94}, which was indicated in  \cite{ZK21}.

\par Let $a^2(-\Lambda_x)\leq\alpha_h^2B$ in $H_h$.
According to \cite[Section 2.2]{Z94}, under the condition 
\begin{gather}
 \sigma_N\geq\tfrac14-\tfrac{1-\ve_0^2}{\tau^2\alpha_h^2}\ \ \text{with}\ \ 0<\ve_0\leq 1,    
\label{stab cond fem}
\end{gather}
the following inequality holds
\[
\ve_0^2\|w\|_B^2\leq \|w\|_B^2+\big(\sigma_N-\tfrac14\big)\tau^2a^2\|w\|_{-\Lambda_x}^2\ \ \text{for}\ \ w\in H_h.
\]
In virtue of the operator inequalities \eqref{oper ineq}, one can set $\alpha_h^2=\frac{12a^2}{h^2}$, then condition \eqref{stab cond fem} becomes equivalent to \eqref{stab cond 1}, and the last inequality implies \eqref{lower bound en norm 1}.

\par Now bound \eqref{stab bound} follows from the  stability result for an abstract three-level method with a weight \cite[Theorem 1]{ZK21}. 
\end{proof}

Recall that bounds \eqref{lower bound en norm 1} and \eqref{stab bound} ensure existence and uniqueness of the approximate solution $v$.
\begin{corollary}
\label{corr: v en bound}
Let $u_0\in H_0^1(I)$, $u_1\in L^2(I)$ and $f\in L^{2,1}(Q)$.
Under the stability condition \eqref{stab cond 1} with some  $0<\ve_0^2\leq 1$,
bound \eqref{stab bound} for $v^0=u_0$, $u_{1\*h}=u_{1\*h}^{(2)} :=\big(\mathbf{I}+\tfrac{\tau^2}{12}a^2\Lambda_x\big)q_hu_1$ with $q_hu_1|_{i=0,N}:=0$ and $f_{\*h}=q_hq_\tau f$ implies the bound
\begin{gather}
\ve_0\max\Big\{\max_{1\leq m\leq M}\|\bar{\delta}_tv^m\|_B,
\max_{0\leq m\leq M}\tfrac{1}{\sqrt{6}}a\|\bar{\delta}_xv^m\|_{h*}
\Big\}
\nonumber\\  
\leq\big(a^2\|u_0\|_{H_0^1(I)}^2+\ve_0^{-2}\|u_1\|_{L^2(I)}^2\big)^{1/2}+2\ve_0^{-1}\|f\|_{L^{2,1}(Q)}.
\label{mesh energy bound}
\end{gather}
\end{corollary}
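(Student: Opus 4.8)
The plan is to apply the stability estimate \eqref{stab bound} of Theorem \ref{theo:stability} to the prescribed data $v^0=u_0$, $u_{1\*h}=u_{1\*h}^{(2)}$, $f_{\*h}=q_hq_\tau f$, then (a) bound its right-hand side from above by the right-hand side of \eqref{mesh energy bound}, and (b) bound each of the two quantities on the left of \eqref{mesh energy bound} from above by $\max_{1\le m\le M}\|\{\check v,v\}^m\|_{E_{\*h}}$ (plus a trivial estimate at $m=0$) by means of the lower bound \eqref{lower bound en norm 1}.

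Treatment of the right-hand side of \eqref{stab bound} splits into three estimates. First, since $v^0$ is the trace of $u_0\in H_0^1(I)\subset C(\bar I)$ on $\bar\omega_h$, the identity in \eqref{oper ineq} together with the elementary bound $|\bar\delta_xv_i^0|^2\le\tfrac1h\int_{x_{i-1}}^{x_i}|\partial_xu_0|^2\,dx$ yields $\|(-\Lambda_x)^{1/2}v^0\|_h=\|\bar\delta_xv^0\|_{h*}\le\|u_0\|_{H_0^1(I)}$. Second, writing $u_{1\*h}^{(2)}=Rq_hu_1$ with $R:=\mathbf{I}+\tfrac{\tau^2}{12}a^2\Lambda_x$ (and $q_hu_1\in H_h$), the condition \eqref{stab cond 1} forces $a^2\tau^2\le h^2$, hence $\tfrac23\mathbf{I}\le R\le\mathbf{I}$ by \eqref{oper ineq}, hence $R^2\le\mathbf{I}$; as $R$ commutes with $B$, this gives $\|B^{-1/2}Rw\|_h\le\|B^{-1/2}w\|_h$, and combining this with the averaging estimate $\|B^{-1/2}q_hw\|_h\le\|w\|_{L^2(I)}$ yields $\|B^{-1/2}u_{1\*h}^{(2)}\|_h\le\|u_1\|_{L^2(I)}$. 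Third, the same averaging estimate applied to each $(q_\tau f)^m\in L^2(I)$ gives $\|B^{-1/2}f_{\*h}^m\|_h\le\|(q_\tau f)^m\|_{L^2(I)}$; pulling the $L^2(I)$-norm under the time integral defining $q_\tau$, summing with the weights appearing in \eqref{stab bound}, and using the partition of unity $\sum_{m=0}^{M}e^{\tau,m}\equiv1$ on $\bar I_T$ produces exactly $\|B^{-1/2}f_{\*h}^0\|_h\tau+2\|B^{-1/2}f_{\*h}\|_{L_{\*h}^{2,1}}\le2\|f\|_{L^{2,1}(Q)}$. Collecting these three estimates bounds the right-hand side of \eqref{stab bound} by $(a^2\|u_0\|_{H_0^1(I)}^2+\ve_0^{-2}\|u_1\|_{L^2(I)}^2)^{1/2}+2\ve_0^{-1}\|f\|_{L^{2,1}(Q)}$.

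For the left-hand side of \eqref{mesh energy bound}, the lower bound \eqref{lower bound en norm 1} immediately gives $\ve_0\|\bar\delta_tv^m\|_B\le\|\{\check v,v\}^m\|_{E_{\*h}}$ for $1\le m\le M$. To handle $\|\bar\delta_xv^m\|_{h*}=\|v^m\|_{-\Lambda_x}$ for $1\le m\le M$, I use the splitting $v^m=\bar s_tv^m+\tfrac{\tau}{2}\bar\delta_tv^m$ and combine $\|p+q\|^2\le2\|p\|^2+2\|q\|^2$ with the operator inequality $a^2(-\Lambda_x)\le\tfrac{12a^2}{h^2}B$ coming from \eqref{oper ineq} and with $a^2\tau^2\le h^2$ to get $a^2\|v^m\|_{-\Lambda_x}^2\le2a^2\|\bar s_tv^m\|_{-\Lambda_x}^2+6\|\bar\delta_tv^m\|_B^2$; multiplying by $\ve_0^2/6$, using $\ve_0\le1$, and invoking \eqref{lower bound en norm 1} gives $\tfrac{1}{\sqrt6}\ve_0a\|\bar\delta_xv^m\|_{h*}\le\|\{\check v,v\}^m\|_{E_{\*h}}$. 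The value $m=0$ is trivial, since then $\tfrac{1}{\sqrt6}\ve_0a\|\bar\delta_xv^0\|_{h*}\le a\|u_0\|_{H_0^1(I)}$ is already dominated by the right-hand side of \eqref{mesh energy bound}. Taking the maxima over $m$ and using \eqref{stab bound} with the estimate of its right-hand side from the previous paragraph completes the proof.

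The operator-inequality bookkeeping is routine; the one genuinely delicate ingredient is the spatial averaging estimate $\|B^{-1/2}q_hw\|_h\le\|w\|_{L^2(I)}$, which I would establish (or cite from \cite{ZK21,Z94}) by noting that $hB$ is exactly the linear-finite-element mass matrix on $\bar\omega_h$ and $h\,q_hw$ the corresponding load vector, so that $\|B^{-1/2}q_hw\|_h$ equals the $L^2(I)$-norm of the $L^2$-projection of $w$ onto the space of continuous piecewise-linear functions vanishing at $0,X$, and is therefore $\le\|w\|_{L^2(I)}$; it is precisely this estimate, together with the special forms $u_{1\*h}=u_{1\*h}^{(2)}$ and $f_{\*h}=q_hq_\tau f$, that makes the data norms collapse to $\|u_1\|_{L^2(I)}$ and to $2\|f\|_{L^{2,1}(Q)}$ with the sharp constant $2$.
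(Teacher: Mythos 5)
Your proof is correct and follows essentially the same route as the paper: apply Theorem \ref{theo:stability} with the given data, majorize the right-hand side via $\|\bar{\delta}_xu_0\|_{h*}\leq\|u_0\|_{H_0^1(I)}$, $\|\mathbf{I}+\tfrac{\tau^2}{12}a^2\Lambda_x\|\leq1$ together with $\|B^{-1/2}q_hw\|_h\leq\|w\|_{L^2(I)}$, and the Minkowski-type argument for $f$, and minorize the left-hand side using \eqref{lower bound en norm 1}. The only (harmless) deviation is that, for the term $a\|\bar{\delta}_xv^m\|_{h*}$, the paper invokes \cite[Lemma 2.1]{Z94} to get \eqref{lower bound en norm 2} with $\ve_1^2=\ve_0^2/3$, whereas you rederive the same bound with the same constant $1/\sqrt{6}$ elementarily from the splitting $v^m=\bar{s}_tv^m+\tfrac{\tau}{2}\bar{\delta}_tv^m$.
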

\begin{proof}
According to \cite[Lemma 2.1]{Z94}, under the condition
\[ 
\sigma_N\geq\tfrac{1+\ve_0^2}{4}-\tfrac{1}{\tau^2\alpha_h^2}\ \ \text{with}\ \ 0<\ve_1\leq 1, 
\]
the following inequality holds
\begin{gather}
\ve_1^2a^2\bar{s}_t(\|v\|_{-\Lambda_x}^2)\leq\|\{\check{v},v\}\|_{E_{\*h}}^2\ \ \text{for any}\ \ \check{v},v\in H_h.
\label{lower bound en norm 2} 
\end{gather}
This condition on $\sigma_N$ for $\alpha_h^2=\frac{12a^2}{h^2}$ 
follows from the above condition \eqref{stab cond 1} provided that $\ve_1^2\leq\ve_0^2\frac{h^2}{3a^2\tau^2}$.
Thus, it is enough to assume that $\ve_1^2\leq \ve_0^2/[3(1-\frac{\ve_0^2}{2})]$ or simply to choose $\ve_1^2=\ve_0^2/3$.
Therefore, the left-hand side of the energy bound \eqref{stab bound} is estimated from below by the left-hand side of bound \eqref{mesh energy bound}.

\par Further, first, we have
\begin{equation}
\|(-\Lambda_x)^{1/2}u_0\|_h=\|\bar{\delta}_xu_0\|_{h*}
=\|q_{0h}^{(1)}\partial_xu_0\|_{h*}
\leq\|\partial_xu_0\|_{L^2(I)},
\label{bound of fd of u}
\end{equation}
with $q_{0h}^{(1)}w_i:=\frac1h\int_{x_{i-1}}^{x_i}w(x)\,dx$, $1\leq i\leq N$.
Second, under the stability condition \eqref{stab cond 1}, we have
$0<\mathbf{I}+\tfrac{\tau^2}{12}a^2\Lambda_x<\mathbf{I}$ in  $H_h$ and thus $\|\mathbf{I}+\tfrac{\tau^2}{12}a^2\Lambda_x\|\leq 1$.
Therefore 
\[
\|B^{-1/2}u_{1\*h}^{(2)}\|_h\leq \|B^{-1/2}q_hu_1\|_h\leq\|u_1\|_{L^2(I)},\]
where the latter inequality follows from \cite{Z94}.
Similarly, the following inequalities hold
\begin{gather*}
\|B^{-1/2}q_hq_\tau f^0\|_h\tau+2\|B^{-1/2}q_hq_\tau f\|_{L_{\*h}^{2,1}}
\leq \tau\|q_\tau f^0\|_{L^2(I)}+2\tau\sum_{m=1}^{M-1}\|q_\tau f^m\|_{L^2(I)}
\\
\leq\tau (q_\tau\|f(\cdot,t)\|_{L^2(I)})^0+2\tau\sum_{m=1}^{M-1}(q_\tau\|f(\cdot,t)\|_{L^2(I)})^m
\leq 2\|f\|_{L^{2,1}(Q)},
\end{gather*}
where the Minkowski generalized integral inequality is also applied. 
Consequently, for the chosen $v^0$, $u_{1\*h}$ and $f_{\*h}$, the right-hand side of the energy bound \eqref{stab bound} is majorized by the right-hand side of bound \eqref{mesh energy bound}.
\end{proof}

\section{\normalsize The 4th order error bound}
\label{sec:4th order error}
\setcounter{equation}{0}
\setcounter{lemma}{0}
\setcounter{theorem}{0}
\setcounter{remark}{0}
\setcounter{corollary}{0}

In this Section, we consider solutions to the IBVP \eqref{eq}--\eqref{bq} having the additional regularity properties $\partial_x^2u,\partial_t^2u,\partial_x^4u,\partial_t^4u,\partial_x^4\partial_t^2u,\partial_x^2\partial_t^4u\in L^{2,1}(Q)$. Consequently, $\partial_x^4u_1=\partial_x^4(\partial_tu)_0\in L^2(I)$; hereafter $g_0=g|_{t=0}$ for functions $g$ depending on $t$.
\begin{theorem}
\label{theo:err bound 4th}
Under the stability condition \eqref{stab cond 1}, for the compact scheme \eqref{eqht}--\eqref{bqht} with $u_{1\*h}=u_{1\*h}^{(1)}=q_hu_1+\tfrac{\tau^2}{12}a^2\Lambda_xu_1$ and $f_{\*h}=q_hq_\tau f$, the 4th order error bound holds 
\begin{gather}
\max_{1\leq m\leq M}\|\{\check{u}-\check{v},u-v\}^m\|_{E_{\*h}}
\nonumber\\
\leq ch^4\big(\|\partial_x^4\partial_t^2u\|_{L^{2,1}(Q)}
+\|\partial_x^2\partial_t^4u\|_{L^{2,1}(Q)}
+\|\partial_x^2\partial_t^3u\|_{L^{2,1}(Q)}
+\|\partial_x^4u_1\|_{L^2(I)}
\big).
\label{r bound 0}
\end{gather}
Hereafter constants $c$, $c_1$, $c_2$, etc. can depend on $X,T,\ve_0$ but are independent of $\*h$.
\end{theorem}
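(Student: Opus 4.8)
The plan is to reduce the error bound to the stability estimate of Theorem~\ref{theo:stability} applied to the \emph{error function} $z:=u-v$, whose residuals are exactly the truncation errors of the scheme. First I would set $z^m=u^m-v^m$ on $\bar\omega_{\*h}$ and substitute $v=u-z$ into \eqref{eqht}, \eqref{inht1}, \eqref{bqht}. Since $z^0=0$ and $z|_{i=0,N}=0$, the error satisfies a problem of the same form as \eqref{eqht}--\eqref{bqht} but with $v^0=0$, with a new free term $\psi:=(s_N-\tfrac{1}{12}\tau^2a^2\Lambda_x)\Lambda_t u-a^2\Lambda_x u-f_{\*h}$ on $\omega_h\times\omega^\tau$, and with a modified second-level datum $\chi:=(s_N-\tfrac{1}{12}\tau^2a^2\Lambda_x)\delta_t u^0-\tfrac{\tau}{2}a^2\Lambda_x u^0-u_{1\*h}-\tfrac{\tau}{2}f_{\*h}^0$ on $\omega_h$. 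Applying \eqref{stab bound} to $z$ then gives
\[
\max_{1\leq m\leq M}\|\{\check z,z\}^m\|_{E_{\*h}}\leq \ve_0^{-2}\|B^{-1/2}\chi\|_h+\ve_0^{-1}\big(\|B^{-1/2}\psi^0\|_h\tau+2\|B^{-1/2}\psi\|_{L_{\*h}^{2,1}}\big),
\]
and since $B\geq\tfrac13\mathbf I$ we may replace $\|B^{-1/2}\cdot\|_h$ by $\sqrt3\,\|\cdot\|_h$ throughout. So everything reduces to $\mathcal O(h^4)$ bounds on $\psi$, $\psi^0$ and $\chi$ in the appropriate mesh norms.

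Next I would estimate the interior truncation error $\psi$. The standard route is to write $\psi=\psi_1+\psi_2$, where $\psi_1:=(s_N-\tfrac{1}{12}\tau^2a^2\Lambda_x)\Lambda_t u-a^2\Lambda_x u - s_{Nt}s_N(\partial_t^2u-a^2\partial_x^2u)$ collects the pure discretization error of the compact operator acting on $u$, and $\psi_2:=s_{Nt}s_N f - f_{\*h}$ collects the error in the averaging of the right-hand side (using $\partial_t^2u-a^2\partial_x^2u=f$). Using the operator identities $s_N=\mathbf I+\tfrac1{12}h^2\Lambda_x$, $s_{Nt}=\mathbf I+\tfrac1{12}\tau^2\Lambda_t$ and the classical remainder formulas $\Lambda_x w_i - \partial_x^2 w_i = \tfrac{h^2}{12}\partial_x^4 w_i + O(h^4\partial_x^6)$ etc., a bookkeeping of Taylor expansions with integral remainders shows $\|\psi_1\|_{L_{\*h}^{2,1}}\leq ch^4(\|\partial_x^4\partial_t^2u\|_{L^{2,1}(Q)}+\|\partial_x^2\partial_t^4u\|_{L^{2,1}(Q)}+\dots)$ under exactly the stated regularity $\partial_x^4\partial_t^2u,\partial_x^2\partial_t^4u\in L^{2,1}(Q)$; the mixed term $\partial_x^2\partial_t^3u$ enters the first-level analysis. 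For $\psi_2$ I would invoke the approximation properties of the operators $q_h$ and $q_\tau$ (this is precisely where ``$f_{\*h}=q_hq_\tau f$ gives the $4$th truncation order'' from \cite{ZK21} is used), noting that $q_hq_\tau f$ reproduces $s_{Nt}s_N f$ up to $\mathcal O(h^4+\tau^4)$ terms controlled by the same derivatives of $u$ via $f=\partial_t^2u-a^2\partial_x^2u$; under the stability condition $\tau\leq Ch$ the $\tau^4$ terms are absorbed into $h^4$ terms. The boundary levels $\psi^0$ and the first-level datum $\chi$ are handled the same way, with $\chi$ additionally producing the term $\|\partial_x^4u_1\|_{L^2(I)}$ (this comes from expanding $\delta_t u^0$ about $t=0$ and from the difference between $u_{1\*h}^{(1)}=q_hu_1+\tfrac{\tau^2}{12}a^2\Lambda_xu_1$ and the exact second initial condition, the $q_hu_1$ versus $u_1$ discrepancy contributing $h^4\partial_x^4u_1$ in $L^2(I)$).

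The main obstacle is the first-level (second initial condition) analysis: equation \eqref{inht1} is only a two-level, one-step formula, so its local truncation order must be $5$ (one order higher than the interior scheme) for the global error to stay $\mathcal O(h^4)$, and verifying that $u_{1\*h}^{(1)}=q_hu_1+\tfrac{\tau^2}{12}a^2\Lambda_xu_1$ achieves this requires carefully matching the $\tau$-expansion of $(s_N-\tfrac1{12}\tau^2a^2\Lambda_x)\delta_tu^0-\tfrac\tau2 a^2\Lambda_xu^0$ against $u_1+\tfrac\tau2 f^0$, using the PDE and its $t$-derivative at $t=0$ to convert time derivatives of $u$ into the data $u_1$ and $f$, and controlling the remainder by $\partial_x^4u_1$ and by $L^{2,1}$-norms of the listed mixed derivatives of $u$ over the thin strip $I_X\times(0,\tau)$. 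Once the three truncation estimates are in place, collecting them and absorbing $\ve_0$-dependent constants into $c$ yields \eqref{r bound 0}.
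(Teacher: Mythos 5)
Your overall skeleton (error equation plus the stability bound \eqref{stab bound}, then $\mathcal O(h^4)$ truncation estimates) matches the paper, but your decomposition of the interior residual $\psi=\psi_1+\psi_2$ with $\psi_1$ measured against $s_{Nt}s_N(\partial_t^2u-a^2\partial_x^2u)$ and $\psi_2=s_{Nt}s_Nf-q_hq_\tau f$ contains a genuine gap: it demands more regularity than the theorem assumes. Rearranging, $\psi_1=s_N(\Lambda_t-s_{Nt}\partial_t^2)u-a^2s_{Nt}(\Lambda_x-s_N\partial_x^2)u$, and the classical Numerov remainders $(\Lambda_x-s_N\partial_x^2)u=\mathcal O(h^4\partial_x^6u)$ and $(\Lambda_t-s_{Nt}\partial_t^2)u=\mathcal O(\tau^4\partial_t^6u)$ require the \emph{pure} sixth-order derivatives $\partial_x^6u$ and $\partial_t^6u$; likewise $\psi_2$ requires $\partial_x^4f$ and $\partial_t^4f$. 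None of these appear in the hypotheses (only the mixed derivatives $\partial_x^4\partial_t^2u$ and $\partial_x^2\partial_t^4u$, plus $\partial_x^2\partial_t^3u$ and $\partial_x^4u_1$), and they are deliberately avoided because the regularity supplied by the data in Section 4 (bounds \eqref{reg bound 1}--\eqref{reg bound 2}) provides exactly the mixed derivatives and nothing more; with your route the interpolation argument of Theorem \ref{theo:general err bound} would break down.

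The missing idea is to apply the exact identities $\Lambda_xw=q_h(\partial_x^2w)$ and $\Lambda_tz=q_\tau(\partial_t^2z)$ to the wave equation itself, which yields the mesh identity $\Lambda_tq_hu-a^2\Lambda_xq_\tau u=q_hq_\tau f$ with no remainder. Then the free term cancels exactly against $f_{\*h}=q_hq_\tau f$ (there is no $\psi_2$ at all --- $q_hq_\tau f$ is not an approximation of $s_{Nt}s_Nf$ but the exact image of $f$ under the operator converting the PDE into the mesh identity), and the interior residual takes the form $(s_N-q_h)\Lambda_tu-a^2(s_{Nt}-q_\tau)\Lambda_xu$. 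Since $\Lambda_tu=q_\tau\partial_t^2u$ already carries only two time derivatives, the fourth-order estimate \eqref{est qh min sN} for $s_N-q_h$ brings in only $\partial_x^4\partial_t^2u$, and symmetrically $s_{Nt}-q_\tau$ applied to $\Lambda_xu=q_h\partial_x^2u$ brings in only $\partial_x^2\partial_t^4u$. The same device at the first level produces $\partial_x^4\partial_tu$ and $\partial_x^2\partial_t^3u$ in $L^{2,\infty}$ over the first time strip, which are then converted to $\|\partial_x^4u_1\|_{L^2(I)}$ and the listed $L^{2,1}$ norms via \eqref{px4 pt1 bound}; your remark about the first-level formula needing truncation order $5$ is also off --- in \eqref{stab bound} the first-level residual enters through the $u_{1\*h}$ slot without division by $\tau$, so order $4$ in $\|\cdot\|_h$ suffices.
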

\begin{proof}
Applying to the wave equation \eqref{eq} the operator $q_hq_\tau$ leads to the equalities
\begin{gather*}
\Lambda_tq_hu-a^2\Lambda_xq_\tau u=q_hq_\tau f\ \ \text{on}\ \ \omega_h\times\omega^\tau,
\label{eqht u}\\
\delta_tq_hu^0-\tfrac{\tau}{2}a^2\Lambda_xq_\tau u^0=q_hu_1+\tfrac{\tau}{2}q_hq_\tau f^0\ \ \text{on}\ \ \omega_h.
\label{inht1 u}
\end{gather*}
They are mentioned in the proofs of Lemmas 1 and 2 in \cite{ZK21} and follow from the known formulas
\begin{gather}
 \Lambda_xw=q_h(\partial_x^2w),\ \
\Lambda_tz=q_\tau(\partial_t^2z),\ \
\tfrac{\tau}{2}(q_\tau\partial_t^2z)^0=\delta_tz^0-\partial_tz|_{t=0},
\label{form with Lambda} 
\end{gather}
for $w\in W^{2,1}(I)$ and $z\in W^{2,1}(I_T)$.

\par Therefore, in virtue of equations of the compact scheme \eqref{eqht}--\eqref{bqht}, for the error $r:=u-v$, we derive the following equation
\begin{gather*}
\big(s_N-\tfrac{1}{12}\tau^2a^2\Lambda_x\big)\Lambda_t r-a^2\Lambda_xr
=\big(s_N-\tfrac{1}{12}\tau^2a^2\Lambda_x\big)\Lambda_tu-a^2\Lambda_xu-
(\Lambda_tq_hu-a^2\Lambda_xq_\tau u)
\\
=(s_N-q_h)\Lambda_tu-a^2(s_{Nt}-q_\tau)\Lambda_xu
\ \ \text{on}\ \ \omega_h\times\omega^\tau
\end{gather*}
together with
\begin{gather*}
\big(s_N-\tfrac{1}{12}\tau^2a^2\Lambda_x\big)\delta_t r^0-\tfrac{\tau}{2}a^2\Lambda_xr^0
\\[1mm]
=\big(s_N-\tfrac{1}{12}\tau^2a^2\Lambda_x\big)\delta_tu^0-\tfrac{\tau}{2}a^2\Lambda_xu^0-\big(\delta_tq_hu^0-\tfrac{\tau}{2}a^2\Lambda_xq_\tau u^0\big)
+q_hu_1-u_{1\*h}
\\
=(s_N-q_h)\delta_tu^0-a^2\big\{\big[\tfrac{\tau}{2}\mathbf{I}+\tfrac{1}{12}\tau^2(\delta_t+\partial_t)-\tfrac{\tau}{2}q_\tau\big]\Lambda_xu\big\}^0
\ \ \text{on}\ \ \omega_h,
\label{inht1 r}\\[1mm]
 r^0=0\ \ \text{on}\ \ \bar{\omega}_h,\ \ r_i^m|_{i=0,N}=0,\ \
1\leq m\leq M.
\end{gather*}
Here we have used the formula $q_hu_1-u_{1\*h}=-a^2\frac{\tau^2}{12}(\partial_t\Lambda_xu)^0$ for $u_{1\*h}=u_{1\*h}^{(1)}$.
Therefore, due to the stability Theorem
\ref{theo:stability} and the estimate $\|B^{-1/2}\|\leq\sqrt{3}$, the error bound holds 
\begin{gather}
\max_{1\leq m\leq M}\|\{\check{r},r\}^m\|_{E_{\*h}}
\leq \sqrt{3}\ve_0^{-1} \big(2\|(s_N-q_h)\Lambda_tu\|_{L_{\*h}^{2,1}}
+2a^2\|(s_{Nt}-q_\tau)\Lambda_xu\|_{L_{\*h}^{2,1}}
\nonumber\\
+\|(s_N-q_h)\delta_tu^0\|_h
+a^2\big\|\big\{\big[\tfrac{1}{2}\tau \mathbf{I}+\tfrac{1}{12}\tau^2(\delta_t+\partial_t)-\tfrac{1}{2}\tau q_\tau\big]\Lambda_xu\big\}^0\big\|_h
\big).
\label{r bound 1}
\end{gather}

\par Using the Taylor formula with the center at the points $x_i$, $t_m$ and $t_0=0$ and the remainder in the integral form, one can verify the 4th order bounds
\begin{gather}
 |(q_hw-w)_i|\leq ch^2(q_{0h}|\partial_x^2w|)_i,
\label{est qhw min w}\\
 |(q_hw-s_Nw)_i|\leq ch^4(q_{0h}|\partial_x^4w|)_i,
\ \
 |(q_\tau z-s_{Nt}z)^m|\leq c\tau^4(q_{0\tau}|\partial_t^4z|)^m,
\label{est qh min sN}\\
\big|\big\{\big[\tfrac{1}{2}\tau q_\tau-\tfrac{1}{2}\tau \mathbf{I}-\tfrac{1}{12}\tau^2(\delta_t+\partial_t)\big]z\big\}^0\big|\leq c\tau^4(q_{0\tau}|\partial_t^3z|)^0,
\nonumber
\end{gather}
where $1\leq i\leq N-1$ and $1\leq m\leq M-1$, for functions $w\in L^1(I)$ and $z\in L^1(I_T)$ having the derivatives $\partial_x^2w,\partial_x^4w\in L^1(I)$, $\partial_t^4z\in L^1(I_T)$ and $\partial_t^3z\in L^1(I_\tau)$, respectively.
Clearly, the second and third of these bounds are quite similar.
Bounds similar to all the presented ones are contained in the proofs of Lemmas 1 and 2 in \cite{ZK21}.

\par In virtue of the listed bounds and the above formulas \eqref{form with Lambda} for $u$, bound \eqref{r bound 1} implies
\begin{gather*}
\max_{1\leq m\leq M}\|\{\check{r},r\}^m\|_{E_{\*h}}
\leq c\big(h^4\|q_{0h}q_\tau|\partial_x^4\partial_t^2u|\|_{L_{\*h}^{2,1}}  +\tau^4\|q_hq_{0\tau}|\partial_x^2\partial_t^4u|\|_{L_{\*h}^{2,1}}
\nonumber\\[1mm] 
+h^4\|q_{0h}(q_{0\tau}|\partial_x^4\partial_tu|)^0|\|_h
+\tau^4\|q_h(q_{0\tau}|\partial_x^2\partial_t^3u|)^0|\|_h\big).
\end{gather*}
The mesh norms of the averages of functions on the right-hand side of this bound are less or equal to the corresponding norms of the functions themselves, in general, with a multiplier $c$.
Therefore, using also the stability condition  \eqref{stab cond 1}, we obtain  the further bound
\begin{gather*}
\max_{1\leq m\leq M}\|\{\check{r},r\}^m\|_{E_{\*h}}\leq
ch^4\big(\|\partial_x^4\partial_t^2u\|_{L^{2,1}(Q)}
+\|\partial_x^2\partial_t^4u\|_{L^{2,1}(Q)}
\nonumber\\
+\|\partial_x^4\partial_tu\|_{L^{2,\infty}(Q_\tau)}
+\|\partial_x^2\partial_t^3u\|_{L^{2,\infty}(Q_\tau)}
\big).
\end{gather*}
Applying the simple estimates for the 5th order derivatives of $u$ of the form
\begin{gather}
\|\partial_x^4\partial_tu\|_{L^{2,\infty}(Q_\tau)}\leq \|\partial_x^4u_1\|_{L^2(I)}+\|\partial_x^2\partial_t^4u\|_{L^{2,1}(Q)},
\label{px4 pt1 bound}\\
\|\partial_x^2\partial_t^3u\|_{L^{2,\infty}(Q_\tau)}\leq \tfrac1T\|\partial_x^2\partial_t^3u\|_{L^{2,1}(Q)}+\|\partial_x^2\partial_t^4u\|_{L^{2,1}(Q)},
\nonumber
\end{gather}
we complete the proof.    
\end{proof}

The following addition is important below.
\begin{remark}
\label{rem: another u1h}
The error bound \eqref{r bound 0} remains valid for $u_{1\*h}=u_{1\*h}^{(0)}$
and, in the case $u_1|_{x=0,X}=\partial_x^2u_1|_{x=0,X}=0$, also for $u_{1\*h}=u_{1\*h}^{(2)}$.
\end{remark}
\begin{proof}
In virtue of the stability Theorem \ref{theo:stability}, the replacement of $u_{1\*h}=
u_{1\*h}^{(1)}$ with $u_{1\*h}^{(0)}$ or  $u_{1\*h}^{(2)}$ implies the appearance of the additional summands $\sqrt{3}\ve_0^{-1}\|(s_N-q_h)u_1\|_h\leq ch^4\|\partial_xu_1\|_{L^2(I)}$ or
$S:=\sqrt{3}\ve_0^{-1}a^2\tfrac{1}{12}\tau^2\|\Lambda_x(u_1-q_hu_1)\|_h$, respectively, on the right-hand side of bound \eqref{r bound 1}. 

\par Let us bound $S$. 
To this end, we enlarge the above introduced notation for any $x\in \bar{I}_X$:
\[
(\tilde{\Lambda}_xw)(x):=\frac{1}{h^2}(w(x+h)-2w(x)+w(x-h)),\ \ (\tilde{q}_hw)(x):=\frac1h\int_{x-h}^{x+h}w(\xi)\Big(1-\frac{|\xi-x|}{h}\Big)\,d\xi
\]
for $w\in L^1(\tilde{I})$, where $\tilde{I}:=(-X,2X)$. 
Then the following formulas hold
\begin{gather*}
(\Lambda_xq_hw)_i=\frac{1}{h^3}\int_{-h}^{X+h} w(x)(e_{i+1}^h-2e_i^h+e_{i-1}^h)(x)\,dx=\frac1h\int_0^X(\tilde{\Lambda}_xw)(x)e_i^h(x)\,dx=(q_h(\tilde{\Lambda}_xw))_i,
\\
(\tilde{\Lambda}_xw)(x)=\tilde{q}_h(\partial_x^2w)(x),\ \ x\in \bar{I}_X,
\end{gather*}
where $1\leq i\leq N-1$ and, in the latter formula, it is assumed that  $\partial_x^2w\in L^1(\tilde{I})$. 

\par If $w\in W^{4,2}(I)$, $w|_{x=0,X}=\partial_x^2w|_{x=0,X}=0$ and $w$ is extended oddly with respect to $x=0,X$ on $\tilde{I}$, then there exist $\partial_x^2w,\partial_x^4w\in L^2(\tilde{I})$ and $q_hw|_{x=0,X}=0$.
Therefore, the bound holds
\begin{gather}
\|\Lambda_x(u_1-q_hu_1)\|_h=\|\tilde{\Lambda}_xu_1-q_h\tilde{\Lambda}_xu_1\|_h
\leq c_1h^2\|q_{0h}|\partial_x^2\tilde{\Lambda}_xu_1|\|_h
\leq c_2h^2\|\partial_x^4u_1\|_{L^2(I)},    
\label{est lam mim lam q}
\end{gather}
since bound \eqref{est qhw min w} is valid and $\partial_x^2\tilde{\Lambda}_xu_1=\tilde{\Lambda}_x\partial_x^2u_1=\tilde{q}_h(\partial_x^4u_1)$. 
Thereby $S\leq c\tau^2h^2\|\partial_x^4u_1\|_{L^2(I)}$.
\end{proof}

\section{\normalsize Fractional-order error bounds in terms of data}
\label{sec:fract order bounds}
\setcounter{equation}{0}
\setcounter{lemma}{0}
\setcounter{theorem}{0}
\setcounter{remark}{0}
\setcounter{corollary}{0}

We first consider the Fourier series with respect to sines $w(x)=\sum_{k=1}^\infty\tilde{w}_k\sin\frac{\pi kx}{X}$ for functions $w\in L^2(I)$ and define the well-known Hilbert spaces of functions
\[
\mathcal{H}^\alpha(I)=\Big\{w\in L^2(I);\,
\|w\|_{\mathcal{H}^\alpha}^2=\sum_{k=1}^\infty \big(\tfrac{\pi k}{X}\big)^{2\alpha}\tilde{w}_k^2<\infty,\ \ 
\tilde{w}_k=\sqrt{\tfrac2X}\int_0^Xw(x)\sin\tfrac{\pi kx}{X}\,dx\Big\},\ \ \alpha\geq 0.
\]
Here $\mathcal{H}^0(I)\equiv L^2(I)$, and the space $\mathcal{H}^\alpha(I)$ for $\alpha=k\in\mathbb{N}$ coincides with the subspace in the Sobolev space
\[
 \mathcal{H}^k(I)=\big\{w\in W^{k,2}(I);\, \partial_x^{2j}w|_{x=0,X}=0,\, 0\leq 2j<k\big\},\ \ \|w\|_{\mathcal{H}^k(I)}=\|\partial_x^kw\|_{L^2(I)}.
\]
In particular, $\mathcal{H}^1(I)=H_0^1(I)$.

We also define the Banach spaces $S_{2,1}^{3,2}W(Q)$ and $S_{2,1}^{2,3}W(Q)$ of functions $f\in L^{2,1}(Q)$ having the dominating mixed smoothness, endowed with the norms
\begin{gather*} \|f\|_{S_{2,1}^{3,2}W(Q)}=\|\partial_t^2f\|_{L^1(I_T;\mathcal{H}^3(I))}+\|f_0\|_{\mathcal{H}^4(I)}+\|(\partial_tf)_0\|_{\mathcal{H}^3(I)},
\\
\|f\|_{S_{2,1}^{2,3}W(Q)}=\|\partial_t^3f\|_{L^1(I_T;\mathcal{H}^2(I))}+\|f_0\|_{\mathcal{H}^4(I)}+\|(\partial_tf)_0\|_{\mathcal{H}^3(I)}+\|(\partial_t^2f)_0\|_{\mathcal{H}^2(I)};
\end{gather*}
in this respect see, in particular, \cite{N63,S07,T19}.
Here it is assumed that respectively
$\partial_t^\ell f\in L^1(I_T;\mathcal{H}^3(I))$ for $0\leq \ell\leq 2$, $f_0\in \mathcal{H}^4(I)$ or $\partial_t^\ell f\in L^1(I_T;\mathcal{H}^2(I))$ for $0\leq \ell\leq 3$, $f_0\in \mathcal{H}^4(I)$, $(\partial_tf)_0\in \mathcal{H}^3(I)$.
Note that the additional properties respectively $(\partial_tf)_0\in \mathcal{H}^3(I)$ or $(\partial_t^2f)_0\in \mathcal{H}^2(I)$ follow from the listed ones. 
To simplify the notation of these spaces, we do not  mark the arisen particular conditions of taking zero values at $x=0,X$.

\par Let $H^{(k)}=\mathcal{H}^k(I)$ for integer $k\geq 0$, $F^{(0)}=L^{2,1}(Q)$ and  $F^{(5)}=S_{2,1}^{3,2}W(Q)+S_{2,1}^{2,3}W(Q)$ be the sum of (compatible) Banach spaces, for example, see \cite{BL80}. 
We define the Banach spaces of functions having an intermediate smoothness
\[
  H^{(\varkappa)}=(\mathcal{H}^k(I),\mathcal{H}^{k+1}(I))_{\varkappa-k,\infty},\ \ F^{(\lambda)}=(L^{2,1}(Q),S_{2,1}^{3,2}W(Q)+S_{2,1}^{2,3}W(Q))_{\lambda/5,\infty},\ \ 0<\lambda<5,
\]
where $\varkappa>0$ is noninteger and $k$ is the integer part of $\varkappa$.
Also $(B_0,B_1)_{\alpha,\infty}$ are the Banach spaces constructed by $K_{\alpha,\theta}$-method of real interpolation between Banach spaces $B_0$ and $B_1$, with  $0<\alpha<1$ and $\theta=\infty$  \cite{BL80}; hereafter we need only the case $B_1\subset B_0$.
The spaces $H^{(\varkappa)}$ for noninteger $\varkappa$ are subspaces in the Nikolskii spaces $H_2^\varkappa(I)$, in more detail, see \cite{N69,BL80}.
Let $V(\bar{I})$ be the space of functions of bounded variation on $\bar{I}$. Then  $V(\bar{I})\subset H^{(1/2)}$ that allows one to cover the practically important cases of discontinuous piecewise differentiable functions for $\varkappa=\frac12$ and then functions having discontinuous $k$th order Sobolev derivative on $\bar{I}$  for $\varkappa=k+\frac12$, $k\geq 1$.

\par The exact explicit description of the spaces $F^{(\lambda)}$ is a separate problem in theory of functions of a real variable, but obviously they are more broad than when using the standard Sobolev spaces.
Indeed, consider the Sobolev subspace  $W_{(0)}^{5;2,1}(Q)$ endowed with the norm 
\[
 \|f\|_{W_{(0)}^{5;2,1}(Q)}=\sum_{k=0}^5\sum_{l=0}^{5-k}\|\partial_x^k\partial_t^lf\|_{L^{2,1}(Q)},
\]
where all the appearing derivatives belong to $L^{2,1}(Q)$ and additionally  $f|_{x=0,X}=\partial_x^2f|_{x=0,X}=\partial_x^2\partial_t^2f|_{x=0,X}=0$. Then $W_{(0)}^{5;2,1}(Q)\subset S_{2,1}^{3,2}W(Q)\cap S_{2,1}^{2,3}W(Q)$ and consequently 
\[
(L^{2,1}(Q),W_{(0)}^{5;2,1}(Q))_{\lambda/5,\infty}\subset F^{(\lambda)},\ \ 0<\lambda<5.
\]
Here we take into account the simple imbeddings
\[
S_{2,1}^{1,1}W(Q)\subset C(\bar{I}_T;W^{1,2}(I))\subset C(\bar{Q}),
\]
where $S_{2,1}^{1,1}W(Q)$ is the space of functions $f\in W^{1;2,1}(Q)$ having the dominating mixed derivative $\partial_x\partial_tf\in L^{2,1}(Q)$, endowed with the norm
\[
\|f\|_{S_{2,1}^{1,1}W(Q)}=
\|f\|_{L^{2,1}(Q)}
+\|\partial_xf\|_{L^{2,1}(Q)}
+\|\partial_tf\|_{L^{2,1}(Q)}
+\|\partial_x\partial_tf\|_{L^{2,1}(Q)}.
\]
Namely, these imbeddings guarantee that all the derivatives up to and including the 3rd order of the functions $f\in W_{(0)}^{5;2,1}(Q)$ are continuous in  $\bar{Q}$. 
Therefore, for such functions, we have  $f(x,t)|_{x=0,X}=\partial_x^2f(x,t)|_{x=0,X}=0$ for all $0\leq t\leq T$ and, consequently, the conditions of vanishing at $x=0,X$, including the corner points $(0,0),(X,0)\in\bar{Q}$, which participate  in the above definitions of the spaces $S_{2,1}^{3,2}W(Q)$ and $S_{2,1}^{2,3}W(Q)$, are valid. 

\par Let $w\in L^1(I)$ and $w(x)$ be extended oddly with respect to $x=0,X$ on $\tilde{I}$. We introduce the one more average 
\[
 q_{2h}w_i=q_hw_i-\tfrac{h^2}{12}\Lambda_xq_hw_i=\tfrac{1}{12}(-q_hw_{i-1}+14q_hw_i-q_hw_{i+1}),\ \ 1\leq i\leq N-1.
\]
Here $q_hw|_{i=0,N}=0$.
If $\partial_x^4w\in L^2(I)$ and $w|_{x=0,X}=\partial_x^2w|_{x=0,X}=0$,  the bound holds
\begin{gather}
 \|w-q_{2h}w\|_h\leq ch^4\|\partial_x^4w\|_{L^2(I)}.
\label{err of q2h}
\end{gather}
It follows from the formula
\[
q_{2h}w-w=(q_h-s_N)w-\tfrac{h^2}{12}\Lambda_x(q_hw-w)
\]
in virtue of the first bound \eqref{est qh min sN} and bound \eqref{est lam mim lam q}.

Now we are ready to state and prove the main result of this Section, i.e., the general error bound of the fractional order in terms of the data.
\begin{theorem}
\label{theo:general err bound}
Consider any $1\leq\lambda\leq 6$. Let $u_0\in H^{(\lambda)}$, $u_1\in  H^{(\lambda-1)}$ and $f\in F^{(\lambda-1)}$. 
Under the stability condition \eqref{stab cond 1}, 
for the compact scheme \eqref{eqht}--\eqref{bqht} with $v^0=u^0$, $u_{1\*h}=u_{1\*h}^{(2)}$ and $f_{\*h}=q_hq_\tau f$, the error bound holds 
\begin{gather}
\max_{1\leq m\leq M}\big(\|\bar{\delta}_t(q_{2h}u-v)^m\|_h+\|\bar{\delta}_x(u-v)^m\|_{h*}\big)
\nonumber\\
\leq ch^{4(\lambda-1)/5}\big(\|u_0\|_{ H^{(\lambda)}}+\|u_1\|_{ H^{(\lambda-1)}}+\|f\|_{F^{(\lambda-1)}}\big).
\label{gen err bound}
\end{gather}
\end{theorem}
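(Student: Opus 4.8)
The plan is to obtain \eqref{gen err bound} by operator interpolation, combining the two extreme cases already available: the stability-type bound with the weakest data norms (corresponding to $\lambda=1$, via Corollary \ref{corr: v en bound} together with the triangle inequality and \eqref{err of q2h}) and the fourth-order bound with the strongest data norms (corresponding to $\lambda=6$, via Theorem \ref{theo:err bound 4th} and Remark \ref{rem: another u1h}). First I would fix $\*h$ and view the map $\*d=(u_0,u_1,f)\mapsto r:=u-v$ (more precisely, the pair of mesh functions $m\mapsto\bar\delta_t(q_{2h}u-v)^m$ and $m\mapsto\bar\delta_x(u-v)^m$) as a bounded linear operator $\mathcal{E}_{\*h}$ from the data space into $\ell^\infty$ in $m$ of $H_h\times H_{h*}$. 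Linearity in the data is clear since both the IBVP \eqref{eq}--\eqref{bq} and the scheme \eqref{eqht}--\eqref{bqht} (with the chosen $v^0=u^0$, $u_{1\*h}=u_{1\*h}^{(2)}$, $f_{\*h}=q_hq_\tau f$, all linear in $\*d$) are linear, and $q_{2h}$ is linear.

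The two endpoint estimates are: on $H^{(1)}\times H^{(0)}\times F^{(0)} = H_0^1(I)\times L^2(I)\times L^{2,1}(Q)$, the operator norm of $\mathcal{E}_{\*h}$ is bounded by a constant independent of $\*h$ — this follows by writing $\bar\delta_t(q_{2h}u-v)=\bar\delta_t(q_{2h}u-u)+\bar\delta_t(u-v)$, estimating the first term by $h^2$-times a data norm using a $t$-difference-quotient version of \eqref{err of q2h}, and bounding the second term together with $\bar\delta_x(u-v)$ by the mesh energy quantity controlled in \eqref{mesh energy bound} plus the continuous energy bound \eqref{en bound u} for $u$ itself (to pass from $u-v$ to separate control); and on $H^{(6)}\times H^{(5)}\times F^{(5)}$, the operator norm of $\mathcal{E}_{\*h}$ is bounded by $ch^4$ — this follows from Theorem \ref{theo:err bound 4th} after re-expressing the right-hand side of \eqref{r bound 0} in terms of the data $\*d$ rather than the exact solution $u$, using the representation of $u$ through $\*d$ (the solution operator maps $H^{(6)}\times H^{(5)}\times F^{(5)}$ boundedly into the regularity class required in Section \ref{sec:4th order error}, with $\partial_x^4\partial_t^2u$, $\partial_x^2\partial_t^4u$, $\partial_x^2\partial_t^3u$, $\partial_x^4u_1$ all controlled by $\|u_0\|_{H^{(6)}}+\|u_1\|_{H^{(5)}}+\|f\|_{F^{(5)}}$ — this is where the specific definitions of $S_{2,1}^{3,2}W(Q)$ and $S_{2,1}^{2,3}W(Q)$, built exactly to produce these mixed derivatives, enter). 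Since $q_{2h}$ preserves (with $O(h^4)$ error) the relevant norms and $u^0=u_0$, the $q_{2h}u-v$ term causes no trouble at this endpoint either.

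Then I would invoke the real interpolation functor $(\cdot,\cdot)_{\theta,\infty}$: applying it to $\mathcal{E}_{\*h}\colon (B_0,B_1)$ with $B_0=H^{(1)}\times H^{(0)}\times F^{(0)}$, $B_1=H^{(6)}\times H^{(5)}\times F^{(5)}$ and interpolation parameter $\theta=(\lambda-1)/5\in[0,1]$, the interpolation theorem for linear operators gives $\|\mathcal{E}_{\*h}\|_{(B_0,B_1)_{\theta,\infty}\to \text{(target)}}\leq C\,1^{1-\theta}(ch^4)^{\theta}=ch^{4(\lambda-1)/5}$, with $C$ independent of $\*h$ (the target space, being a mesh $\ell^\infty$-type space, interpolates trivially or can simply be kept fixed as the larger endpoint). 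It remains to identify $(B_0,B_1)_{\theta,\infty}$ with (a space containing) $H^{(\lambda)}\times H^{(\lambda-1)}\times F^{(\lambda-1)}$: for a finite product of Banach couples, real interpolation commutes with direct sums, so $(B_0,B_1)_{\theta,\infty}=(H^{(1)},H^{(6)})_{\theta,\infty}\times(H^{(0)},H^{(5)})_{\theta,\infty}\times(F^{(0)},F^{(5)})_{\theta,\infty}$; the third factor is $F^{(\lambda-1)}$ by definition, and the first two are $H^{(\lambda)}$ and $H^{(\lambda-1)}$ respectively by the reiteration theorem for the scale $\{\mathcal{H}^k(I)\}$ (which identifies $(\mathcal{H}^{k},\mathcal{H}^{k+5})_{\theta,\infty}$ with $(\mathcal{H}^{\lfloor\vk\rfloor},\mathcal{H}^{\lfloor\vk\rfloor+1})_{\vk-\lfloor\vk\rfloor,\infty}=H^{(\vk)}$ for the appropriate $\vk$), together with continuity of the scale at integer points. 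The endpoint cases $\lambda=1$ and $\lambda=6$ are just the two input estimates directly.

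\textbf{Main obstacle.} I expect the principal difficulty to be the second endpoint: re-expressing the $u$-dependent right-hand side of \eqref{r bound 0} purely in terms of $\|u_0\|_{H^{(6)}}+\|u_1\|_{H^{(5)}}+\|f\|_{F^{(5)}}$. One must show that under these data hypotheses the weak solution $u$ actually possesses $\partial_x^4\partial_t^2u,\partial_x^2\partial_t^4u,\partial_x^2\partial_t^3u\in L^{2,1}(Q)$ and $\partial_x^4u_1\in L^2(I)$ with the stated bounds — this requires differentiating the equation $\partial_t^2u=a^2\partial_x^2u+f$ in $x$ and $t$, using the boundary conditions at $x=0,X$ (hence the hidden zero-trace conditions packed into $\mathcal{H}^k$ and into the $S$-spaces), and applying the energy identity to the differentiated problems; the bookkeeping of which mixed derivative is controlled by which data norm is exactly what dictated the somewhat asymmetric definitions of $S_{2,1}^{3,2}W$ and $S_{2,1}^{2,3}W$. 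The interpolation machinery itself is then routine.
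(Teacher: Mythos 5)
Your overall architecture is exactly the paper's: two endpoint estimates (a uniform-in-$\*h$ bound at $\lambda=1$ from the continuous energy bound \eqref{en bound u} and the mesh energy bound \eqref{mesh energy bound}, and an $\mathcal{O}(h^4)$ bound at $\lambda=6$ from Theorem \ref{theo:err bound 4th} with Remark \ref{rem: another u1h} combined with regularity bounds of the type \eqref{reg bound 1}--\eqref{reg bound 2} that re-express the derivatives in \eqref{r bound 0} through the data), followed by real interpolation of the linear error operator componentwise in $u_0$, $u_1$, $f$ and the imbedding of $H^{(\lambda)}$ into the interpolation space. Your treatment of the strong endpoint, including the use of \eqref{err of q2h} to pass from $\bar{\delta}_t(u-v)$ to $\bar{\delta}_t(q_{2h}u-v)$ there, and your identification of the main difficulty, both match the paper.

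One step would fail as written, though it is easily repaired. At the weak endpoint you propose to split $\bar{\delta}_t(q_{2h}u-v)=\bar{\delta}_t(q_{2h}u-u)+\bar{\delta}_t(u-v)$ and to estimate the first term by $h^2$ times a data norm via a difference-quotient version of \eqref{err of q2h}. Any such approximation estimate for $q_{2h}w-w$ requires at least $\partial_x^2 w\in L^2(I)$ applied to $w=\bar{\delta}_t u$, i.e.\ essentially $\partial_x^2\partial_t u\in L^{2,\infty}(Q)$; with only $u_0\in H_0^1(I)$, $u_1\in L^2(I)$, $f\in L^{2,1}(Q)$ the solution merely satisfies $\partial_t u\in C(\bar{I}_T;L^2(I))$, so that estimate is unavailable. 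Since only an $\mathcal{O}(1)$ operator bound is needed at this endpoint, the correct move (the paper's) is not to subtract $u$ at all but to bound $\|\bar{\delta}_t q_{2h}u\|_h$ directly through the uniform boundedness $\|q_{2h}w\|_h\leq\tfrac43\|q_hw\|_h\leq\tfrac43\|w\|_{L^2(I)}$ together with \eqref{bound of fd of u} and \eqref{en bound u}, and separately bound $v$ by Corollary \ref{corr: v en bound}. With that substitution your argument goes through and coincides with the paper's proof.
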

\begin{proof}
First, for $\lambda=1$, we have
\begin{gather*}
\max_{1\leq m\leq M}\big(\|\bar{\delta}_t(q_{2h}u-v)^m\|_h+\|\bar{\delta}_x(u-v)^m\|_{h*}\big)
\\
\leq\max_{1\leq m\leq M}\big(\|\bar{\delta}_tq_{2h}u^m\|_h+\|\bar{\delta}_xu^m\|_{h*}\big)
+\max_{1\leq m\leq M}\big(\|\bar{\delta}_tv^m\|_h+\|\bar{\delta}_xv^m\|_{h*}\big)
\\
\leq \max_{0\leq t\leq T}\big(c\|\partial_tu(\cdot,t)\|_{L^2(I)}+\|\partial_xu(\cdot,t)\|_{L^2(I)}\big)
+4\max\Big\{\max_{1\leq m\leq M}\|\bar{\delta}_tv^m\|_B,\max_{1\leq m\leq M}\|\bar{\delta}_xv^m\|_{h*}\Big\},
\end{gather*}
where the norms of  $\bar{\delta}_tq_{2h}u$ and $\bar{\delta}_xu$ are bounded similarly as above using the inequalities $\|q_{2h}w\|_h\leq\frac43\|q_hw\|_h$ and \eqref{bound of fd of u}.
Therefore, in virtue of the energy bound \eqref{en bound u} and Corollary  \ref{corr: v en bound}, we get
\begin{gather*}
\max_{1\leq m\leq M}\big(\|\bar{\delta}_t(q_{2h}u-v)^m\|_h+\|\bar{\delta}_x(u-v)^m\|_{h*}\big)
\leq c\big(\|u_0\|_{H_0^1(I)}+\|u_1\|_{L^2(I)}+\|f\|_{L^{2,1}(Q)}\big).
\end{gather*}
This is the error bound \eqref{gen err bound} for  $\lambda=1$.

\par Second, let  $u_0\in\mathcal{H}^6(I)$ and $u_1\in\mathcal{H}^5(I)$.
In virtue of \cite[Proposition 1.3]{Z94},
for $f\in S_{2,1}^{3,2}W(Q)$, the following regularity property holds for the solution to the IBVP \eqref{eq}--\eqref{bq}:
\begin{gather} \|\partial_tu\|_{C(\bar{I}_T;\mathcal{H}^5(I))}
+\|\partial_t^2u\|_{C(\bar{I}_T;\mathcal{H}^4(I))}
+\|\partial_t^3u\|_{C(\bar{I}_T;\mathcal{H}^3(I))}
+\|\partial_t^4\partial_x^2u\|_{L^{2,1}(Q)}
\nonumber\\ 
\leq c\big(\|u_0\|_{\mathcal{H}^6(I)}+\|u_1\|_{\mathcal{H}^5(I)}+\|f\|_{S_{2,1}^{3,2}W(Q)}\big),
\label{reg bound 1}
\end{gather}
and, for $f\in S_{2,1}^{2,3}W(Q)$, another  regularity property holds
\begin{gather}
\|\partial_t^2u\|_{C(\bar{I}_T;\mathcal{H}^4(I))}
+\|\partial_t^3u\|_{C(\bar{I}_T;\mathcal{H}^3(I))}
+\|\partial_t^4u\|_{C(\bar{I}_T;\mathcal{H}^2(I))}
+\|\partial_t^5\partial_xu\|_{L^{2,1}(Q)}
\nonumber\\ 
\leq c\big(\|u_0\|_{\mathcal{H}^6(I)}+\|u_1\|_{\mathcal{H}^5(I)}+\|f\|_{S_{2,1}^{2,3}W(Q)}\big).
\label{reg bound 2}
\end{gather}
The derivatives of $u$ appearing in their left-hand sides exist and belong to the spaces in whose norms they stand.
Therefore, with the help of inequalities \eqref{lower bound en norm 1} and \eqref{lower bound en norm 2} for $\ve_1^2=\ve_0^2/3$, from the 4th order error bound \eqref{r bound 0} and Remark \ref{rem: another u1h}, for $f\in S_{2,1}^{3,2}W(Q)+S_{2,1}^{2,3}W(Q)$, 
the bound in terms of the data holds
\begin{gather}
\max_{1\leq m\leq M}\big(\|\bar{\delta}_t(u-v)^m\|_h+\|\bar{\delta}_x(u-v)^m\|_{h*}\big)
\nonumber\\
\leq
ch^4\big(\|u_0\|_{\mathcal{H}^6(I)}+\|u_1\|_{\mathcal{H}^5(I)}+\|f\|_{S_{2,1}^{3,2}W(Q)+S_{2,1}^{2,3}W(Q)}\big).
\label{second err bound order 4}
\end{gather}

\par Further, using bound \eqref{err of q2h} for $w-q_{2h}w$, we obtain the bound
\[
\max_{1\leq m\leq M}\|\bar{\delta}_t(u-q_{2h}u)\|_h\leq ch^4\|\partial_t\partial_x^4u\|_{L^{2,\infty}(Q)}.
\]
Here we use the properties $u(x,t)|_{x=0,X}=\partial_x^2u(x,t)|_{x=0,X}=0$ for any $0\leq t\leq T$ (notice that  $a^2\partial_x^2u=\partial_t^2u-f$, where $\partial_t^2u,f\in S_{2,1}^{1,1}W(Q)\subset C(\bar{Q})$).
Therefore, with the help of estimate \eqref{px4 pt1 bound} for $u$, bound \eqref{second err bound order 4} remains valid for $\bar{\delta}_t(u-v)$ replaced with $\bar{\delta}_t(q_{2h}u-v)$, i.e., the error bound  \eqref{gen err bound} holds for $\lambda=6$.

\par Finally, we consider the error operator $R$: $\*d=(u_0,u_1,f)\to R\*d= \{\bar{\delta}_t(q_{2h}u-v),\bar{\delta}_x(u-v)\}$;
note that $(u-v)^0=0$ on $\bar{\omega}_h$.
The appeared pair of errors belongs to the Banach space $B_0$ of pairs whose components $\{y,z\}$ are given on the meshes $\omega_h\times(\omega^\tau\cup\{T\})$ and $(\omega_h\cup\{X\})\times(\omega^\tau\cup\{T\})$, endowed with the norm $\|\{y,z\}\|_{B_0}=\max_{1\leq m\leq M}(\|y^m\|_h+\|z^m\|_{h*})$.

\par We apply the basic theorem on interpolation of linear operators \cite{BL80} to the operator $R$ using the proven error bound \eqref{gen err bound} for $\lambda=1$ and 6 which  expresses bounds for the corresponding norms of $R$. Since $R$ is linear, it is convenient to apply the theorem separately in $u_0$ (i.e., for $u_1=f=0$),  $u_1$ (i.e., for $u_0=f=0$) and $f$ (i.e., for $u_0=u_1=0$), then add the results and finally derive the error bound \eqref{gen err bound} for $1<\lambda<6$.
Here we applied the well-known imbedding   
$\mathcal{H}^{k+j}(I)\subset(\mathcal{H}^j(I),\mathcal{H}^{5+j}(I))_{k/5,\infty}$ for $k=1,2,3,4$ and $j=0,1$.
\end{proof}

Note that, in the error bound \eqref{gen err bound}, one can use  subspaces in the Nikolskii spaces $H_2^{\lambda}(I)$ and $H_2^{\lambda-1}(I)$ instead of the Sobolev subspaces $H^{(\lambda)}$ and $H^{(\lambda-1)}$ for integer $\lambda=2,3,4,5$ as well (the former subspaces are slightly broader than the latter ones), but this is less explicit and has few applications for the wave equation.  

\section{\normalsize Lower error bounds}
\label{sec:lower bounds}
\setcounter{equation}{0}
\setcounter{lemma}{0}
\setcounter{theorem}{0}
\setcounter{remark}{0}
\setcounter{corollary}{0}

\par To simplify further formulas, we can assume that
$X=\pi$ and $a=1$  by scaling.
Recall the well-known spectral formulas 
\begin{equation}
-\Lambda_x\sin kx=\lambda_k\sin kx\ \ \text{on}\ \ \omega_h,\ \ 
\lambda_k=\Big(\dfrac{2}{h}\sin\dfrac{kh}{2}\Big)^2,\ \ 1\leq k\leq N-1.
\label{eig values}
\end{equation}

\par For $\*d(x,t)=(\alpha_0,\alpha_1,g(t))\sin kx$, where $\alpha_0$ and $\alpha_1$ are constants and
$g\in L^1(I_T)$,
the classical Fourier formula represents the solution to the IBVP \eqref{eq}--\eqref{bq}:
\begin{equation}
\label{4.1}
u(x,t)=\Big(\alpha_0\cos kt+\dfrac{\alpha_1}{k}\sin kt+\dfrac{1}{k}\int_0^t g(\theta)\sin k(t-\theta)\,d\theta\Big)\sin kx\ \ \text{on}\ \ \bar{Q}.
\end{equation}

\par We need its counterpart for the compact scheme. 
For $y\in C(\bar{I}_T)$, let $\hat{s}_t y$ be its piecewise linear interpolant, i.e.,
$\hat{s}_ty(t_m)=y(t_m)$ on $\bar{\omega}^\tau$ and $\hat{s}_ty$ is linear on the segments $[t_{m-1},t_m]$, $1\leq m\leq M$.
\begin{lemma}
\label{lemma41}
Let the stability condition \eqref{stab cond 1} be valid and $1\leq k\leq N-1$. For $\*d(x,t)=(\alpha_0,\alpha_1,g(t))\sin kx$ with $g\in L^1(I_T)$, the solution to the compact scheme \eqref{eqht}--\eqref{bqht} with $v^0=u_0$, some $u_{1\*h}$ such that $u_{1\*h}=a_{1k}\alpha_1\sin kx$ on $\bar{\omega}_h$ and $f_{\*h}=q_hq_tf$, is represented as
\begin{equation}
\label{4.2}
v(x,t)=\Big(
\alpha_0\cos \mu_k t+\hat{\gamma}_{1k}\dfrac{\alpha_1}{k}\sin \mu_k t 
+ \frac{\gamma_{1k}}{k}\int_0^t g(\theta)\hat{s}_\theta\sin \mu_k(t-\theta)\,d\theta\Big)\sin kx\ \ \text{on}\ \ \bar{\omega}_{\*h},
\end{equation}
where the coefficients $\mu_k$, $\hat{\gamma}_{1k}$ and $\gamma_{1k}$ are given by the formulas
\begin{gather}
\mu_k=\dfrac{2}{\tau}\arcsin\dfrac{\tau\varphi_k}{2},\ \ \varphi_k=\Big(\dfrac{\lambda_k}{1-(h^2/6)\lambda_k
+\tau^2\sigma_N\lambda_k}\Big)^\frac{1}{2},
\label{4.3}\\
\hat{\gamma}_{1k}=a_{1k}\frac{2k}{\lambda_k\tau}\tan \frac{\mu_k\tau}{2},\ \ \gamma_{1k}=\frac{2}{k\tau}
\tan\frac{\mu_k\tau}{2}.
\label{4.5}
\end{gather}

\par In particular, for $u_{1\*h}=u_{1\*h}^{(0)}$, $u_{1\*h}^{(1)}$ and $u_{1\*h}^{(2)}$, we respectively have
\begin{gather}
a_{1k}=1-\frac{h^2+\tau^2}{12}\lambda_k,\ \ 
\frac{\lambda_k}{k^2}\Big(1-\frac{\tau^2k^2}{12}\Big),\ \
\frac{\lambda_k}{k^2}\Big(1-\frac{\tau^2\lambda_k}{12}\Big).
\label{a1 for u1h}
\end{gather}
Hereafter, for brevity, the dependence of the coefficients on $\*h$ is not indicated.
\end{lemma}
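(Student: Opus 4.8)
The plan is to reduce the compact scheme, applied to the single harmonic $\sin kx$ in $x$, to a scalar constant-coefficient three-level difference scheme for the Fourier coefficient $T^m$, to solve the latter by a discrete variation-of-parameters (Duhamel) formula, and to identify the resulting coefficients with \eqref{4.3}--\eqref{4.5}.

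First I would pass to the FEM form \eqref{eqht fem}--\eqref{inht1 fem} (here $a=1$), in which the operator multiplying $\Lambda_t v$ and $\delta_t v^0$ is $B-\sigma_N\tau^2\Lambda_x>0$ in $H_h$. By \eqref{eig values}, $B=\mathbf{I}+\tfrac{1}{6}h^2\Lambda_x$, and the exact relation $(q_hw)_i=\tfrac{\lambda_k}{k^2}\sin kx_i$ for $w(x)=\sin kx$ (which, after the substitution $x=x_i+s$, reduces to $\tfrac{2}{h}\int_0^h\cos ks\,(1-\tfrac{s}{h})\,ds=\tfrac{2(1-\cos kh)}{h^2k^2}=\tfrac{\lambda_k}{k^2}$), so that for $\*d(x,t)=(\alpha_0,\alpha_1,g(t))\sin kx$ one has $f_{\*h}^m=\tfrac{\lambda_k}{k^2}(q_\tau g)^m\sin kx$, $u_{1\*h}=a_{1k}\alpha_1\sin kx$ and $(B-\sigma_N\tau^2\Lambda_x)\sin kx=D_k\sin kx$ with $D_k:=1-\tfrac{1}{6}h^2\lambda_k+\sigma_N\tau^2\lambda_k>0$. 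Since $\sin kx$ is a common eigenfunction of all the spatial operators of the scheme and $v_i^m|_{i=0,N}=0$, the scheme decouples mode-by-mode and, the data being supported on the $k$-th mode, its unique solution is $v(x_i,t_m)=T^m\sin kx_i$ with
\[ \Lambda_t T^m+\varphi_k^2 T^m=\tfrac{\varphi_k^2}{k^2}(q_\tau g)^m,\ \ 1\leq m\leq M-1,\qquad \delta_t T^0+\tfrac{\tau}{2}\varphi_k^2 T^0=\tfrac{a_{1k}\alpha_1}{D_k}+\tfrac{\tau}{2}\tfrac{\varphi_k^2}{k^2}(q_\tau g)^0,\qquad T^0=\alpha_0, \]
where $\varphi_k^2=\lambda_k/D_k$ as in \eqref{4.3}. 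The stability condition \eqref{stab cond 1} yields $\tau^2\varphi_k^2<4$ (with $s:=h^2\lambda_k\in(0,4)$ and $\rho:=\tau^2/h^2<1$ one has $\tau^2\varphi_k^2=\rho s/(1-\tfrac{1}{12}(1-\rho)s)$, and $\tau^2\varphi_k^2<4\Leftrightarrow s(\tfrac{1}{3}+\tfrac{2}{3}\rho)<4$, true since $s<4$ and $\tfrac{1}{3}+\tfrac{2}{3}\rho\leq 1$), so $\mu_k=\tfrac{2}{\tau}\arcsin\tfrac{\tau\varphi_k}{2}$ with $\tfrac{\mu_k\tau}{2}\in(0,\tfrac{\pi}{2})$ is well defined and $\tan\tfrac{\mu_k\tau}{2}>0$.

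Next I would verify that the right-hand side of \eqref{4.2} solves this scalar scheme. From $\sin^2\tfrac{\mu_k\tau}{2}=\tfrac14\tau^2\varphi_k^2$ both $\cos\mu_k t_m$ and $\sin\mu_k t_m$ are annihilated by $\Lambda_t+\varphi_k^2$, so only the convolution term $P^m:=\tfrac{\gamma_{1k}}{k}\int_0^{t_m}g(\theta)\,\hat{s}_\theta\sin\mu_k(t_m-\theta)\,d\theta$ needs attention. Expanding $\hat{s}_\theta\sin\mu_k(t_m-\theta)$ on each $[t_j,t_{j+1}]$ as the linear interpolant of $\sin\mu_k t_{m-j}$ and $\sin\mu_k t_{m-j-1}$ and setting $I_j^-:=\int_{t_j}^{t_{j+1}}g(\theta)\tfrac{t_{j+1}-\theta}{\tau}\,d\theta$, $I_j^+:=\int_{t_j}^{t_{j+1}}g(\theta)\tfrac{\theta-t_j}{\tau}\,d\theta$, a reindexing gives $P^m=\tfrac{\gamma_{1k}}{k}\sum_{q=0}^{m-1}G_q\sin\mu_k t_{m-q}$ with $G_q=I_q^-+I_{q-1}^+$ for $q\geq 1$ and $G_0=I_0^-$; comparing with the definition of $q_\tau$ one identifies $G_q=\tau(q_\tau g)^q$ for $1\leq q\leq M-1$ and $G_0=\tfrac{\tau}{2}(q_\tau g)^0$. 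A telescoping computation of $\tau^2\Lambda_t P^m=P^{m+1}-2P^m+P^{m-1}$, using the homogeneous recursion for $\sin\mu_k t_s$ and $\sin\mu_k t_0=0$, collapses to $\tau^2\Lambda_t P^m=-\tau^2\varphi_k^2 P^m+\tfrac{\gamma_{1k}}{k}G_m\sin\mu_k\tau$, that is $\Lambda_t P^m+\varphi_k^2 P^m=\tfrac{\gamma_{1k}\sin\mu_k\tau}{k\tau}(q_\tau g)^m=\tfrac{\varphi_k^2}{k^2}(q_\tau g)^m$ for $1\leq m\leq M-1$, where the last step uses $\gamma_{1k}\sin\mu_k\tau=\tfrac{2}{k\tau}\tan\tfrac{\mu_k\tau}{2}\sin\mu_k\tau=\tfrac{4}{k\tau}\sin^2\tfrac{\mu_k\tau}{2}=\tfrac{\tau\varphi_k^2}{k}$, which follows from \eqref{4.5} and $\varphi_k^2=\lambda_k/D_k$. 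The two initial relations are checked the same way: $T^0=\alpha_0$ since the integral and $\sin\mu_k t_0$ vanish, and for $\delta_t T^0$ the identity $\hat{s}_\theta\sin\mu_k(\tau-\theta)=\tfrac{\tau-\theta}{\tau}\sin\mu_k\tau$ on $[0,\tau]$ gives $P^1=\tfrac{\gamma_{1k}}{k}\cdot\tfrac{\tau}{2}(q_\tau g)^0\sin\mu_k\tau$, which together with $\tfrac{\cos\mu_k\tau-1}{\tau}=-\tfrac{\tau}{2}\varphi_k^2$, $\hat{\gamma}_{1k}\tfrac{\sin\mu_k\tau}{k\tau}=\tfrac{a_{1k}}{D_k}$ and $\tfrac{\gamma_{1k}}{k}\tfrac{\sin\mu_k\tau}{\tau}=\tfrac{\varphi_k^2}{k^2}$ (all immediate from \eqref{4.5} and $\varphi_k^2=\lambda_k/D_k$) reproduces the second equation. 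By uniqueness, \eqref{4.2} follows.

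Finally, \eqref{a1 for u1h} is obtained by applying the operators $s_N+\tfrac{\tau^2}{12}\Lambda_x$, $q_h+\tfrac{\tau^2}{12}\Lambda_x$ and $(\mathbf{I}+\tfrac{\tau^2}{12}\Lambda_x)q_h$ (which, with $a=1$, define $u_{1\*h}^{(0)}$, $u_{1\*h}^{(1)}$, $u_{1\*h}^{(2)}$) to $u_1=\alpha_1\sin kx$ and using $-\Lambda_x\sin kx=\lambda_k\sin kx$, $s_N\sin kx=(1-\tfrac{1}{12}h^2\lambda_k)\sin kx$ and $(q_h\sin k\,\cdot\,)_i=\tfrac{\lambda_k}{k^2}\sin kx_i$. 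I expect the main obstacle to be the bookkeeping in the discrete Duhamel step: tracking the end effects near $m=1$ and the extra factor $\tfrac12$ distinguishing $G_0$ (reflecting the special form of $q_\tau$ at $t=0$) from $G_q$, $q\geq 1$, while keeping the interpolation $\hat{s}_\theta$ consistently applied to $\theta\mapsto\sin\mu_k(t_m-\theta)$ for each fixed $t_m$ rather than to $\sin\mu_k(t-\theta)$ on a single mesh.
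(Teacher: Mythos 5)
Your proof is correct, but it takes a genuinely different route from the paper's. The paper disposes of this lemma in one sentence: it identifies the compact scheme (in the form \eqref{eqht fem}--\eqref{inht1 fem}, with $f_{\*h}=q_hq_\tau f$ and the given $u_{1\*h}$) with the bilinear FEM with the special weight $\sigma=\sigma_N$ and then simply cites the more general Fourier representation \cite[Lemma 1.1]{Z25}, so that \eqref{4.2}--\eqref{4.5} are read off by substituting $\sigma_N$ into that result; only the values \eqref{a1 for u1h} require the same direct computation you perform at the end. You instead give a self-contained derivation: diagonalize on the $k$-th sine mode, reduce to the scalar three-level recursion for the Fourier coefficient $T^m$, and verify the discrete Duhamel formula by the telescoping computation of $\Lambda_tP^m$ together with the two initial relations. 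All the individual steps check out: $(q_h\sin k\cdot)_i=\tfrac{\lambda_k}{k^2}\sin kx_i$, $D_k=1-\tfrac{s}{12}(1-\rho)$ and hence $\tau^2\varphi_k^2<4$ under \eqref{stab cond 1}, the identification $G_q=\tau(q_\tau g)^q$ for $q\geq1$ and $G_0=\tfrac{\tau}{2}(q_\tau g)^0$ against the definition of $q_\tau$, the identities $\gamma_{1k}\sin\mu_k\tau=\tfrac{\tau\varphi_k^2}{k}$ and $\hat{\gamma}_{1k}\tfrac{\sin\mu_k\tau}{k\tau}=\tfrac{a_{1k}}{D_k}$, and the three evaluations in \eqref{a1 for u1h} (including the point that in $u_{1\*h}^{(2)}$ the operator $\Lambda_x$ acts on $q_hu_1$, producing the factor $\lambda_k$ rather than $k^2$). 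What your approach buys is independence from the external reference and an explicit exhibition of where the piecewise-linear interpolant $\hat{s}_\theta$ and the halved weight at $m=0$ in $q_\tau$ enter; what the paper's approach buys is brevity and reuse of a result already established for the whole family of weighted FEM schemes.
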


\par This lemma follows from the more general one \cite[Lemma 1.1]{Z25} since the compact scheme can be considered as the bilinear FEM with the special weight $\sigma=\sigma_N$ that has already been used above. 

\par Notice that $1-\frac{h^2}{6}\lambda_k
+\tau^2\sigma_N\lambda_k=1+\frac{\tau^2-h^2}{12}\lambda_k$.
Due to the stability condition  \eqref{stab cond 1}, it is not difficult to check that
\[
\min\Big\{\frac23,(1+\ve_1)\frac{\tau^2}{4}\lambda_k\Big\}
\leq 1 +\frac{\tau^2-h^2}{12}\lambda_k\leq 1\ \ \text{with}\ \ \ve_1=\frac23\frac{\ve_0^2}{1-\ve_0^2/2}
\]
and thus
\begin{gather}
 \frac{\tau}{2}\sqrt{\lambda_k}\leq\frac{\tau\vp_k}{2}\leq \min\Big\{\frac{1}{\sqrt{1+\ve_1}}, \sqrt{\frac{3}{2}}\frac{\tau}{2}\sqrt{\lambda_k}\Big\}.
\label{fi k bound}
\end{gather}

Let us expand the terms $\mu_k$, $\hat{\gamma}_{1k}$ and $\gamma_{1k}$. 
\begin{lemma}
\label{lemma52}
Let the stability condition \eqref{stab cond 1} be valid and $1\leq k\leq N-1$. The asymptotic formulas hold: $\mu_k \asymp k$, i.e., $\underline{c}k\leq\mu_k\leq \bar{c}k$ with some $0<\underline{c}<\bar{c}$ independent of $\*h$ and $k$, and 
\begin{gather}
\mu_k = k-k^5\nu_{\*h}+{\mathcal O}(k^7h^6)\ \ \text{with}\ \ \nu_{\*h}:=\tfrac{1}{480}(h^4-\tau^4),\ \ \tfrac{\ve_0^2}{2} h^4\leq 480\nu_{\*h}\leq h^4,
\label{4.10}\\[1mm]
\hat{\gamma}_{1k}=1+{\mathcal O}\big((kh)^2\big),\ \
\gamma_{1k}=1+{\mathcal O}\big((kh)^2\big)
\label{4.11}
\end{gather}
for any $a_{1k}=1+{\mathcal O}\big((kh)^2\big)$ including those given by formulas \eqref{a1 for u1h}.
\end{lemma}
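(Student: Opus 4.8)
The plan is to establish the three asymptotic formulas \eqref{4.10} and \eqref{4.11} by expanding the closed-form expressions \eqref{4.3}--\eqref{4.5} for $\mu_k$, $\hat\gamma_{1k}$ and $\gamma_{1k}$ as power series in the small parameter $kh$ (keeping in mind that $\tau\le h$ by the stability condition \eqref{stab cond 1}, so $k\tau$ is controlled by $kh$ as well). First I would record the elementary expansions that drive everything: from \eqref{eig values}, $\lambda_k=k^2-\tfrac1{12}k^4h^2+\mathcal O(k^6h^4)$, and more precisely $\lambda_k = \tfrac4{h^2}\sin^2\tfrac{kh}2$, so that $\sqrt{\lambda_k}=\tfrac2h\sin\tfrac{kh}2 = k\bigl(1-\tfrac1{24}(kh)^2+\tfrac1{1920}(kh)^4+\mathcal O((kh)^6)\bigr)$. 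Likewise $1+\tfrac{\tau^2-h^2}{12}\lambda_k = 1+\mathcal O((kh)^2)$ is bounded between $\tfrac23$-type constants and $1$ by the inequality already displayed just before the lemma, which gives the two-sided control needed for the $\asymp$ claim.

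Next I would handle $\mu_k$. Write $\varphi_k = \sqrt{\lambda_k}\,\bigl(1+\tfrac{\tau^2-h^2}{12}\lambda_k\bigr)^{-1/2}$ from \eqref{4.3} and expand the correction factor: since $\tfrac{\tau^2-h^2}{12}\lambda_k = \tfrac{\tau^2-h^2}{12}k^2 + \mathcal O(k^4h^4)$, we get $\bigl(1+\tfrac{\tau^2-h^2}{12}\lambda_k\bigr)^{-1/2} = 1 - \tfrac{\tau^2-h^2}{24}k^2 + \mathcal O(k^4h^4)$. Combining with the expansion of $\sqrt{\lambda_k}$ above, the $k^3$ coefficients must be tracked carefully: $\varphi_k = k\bigl(1 - \tfrac1{24}(kh)^2 - \tfrac{\tau^2-h^2}{24}k^2 + \cdots\bigr) = k\bigl(1 - \tfrac{\tau^2}{24}k^2 + \tfrac{\text{(quartic)}}{\cdots} + \cdots\bigr)$; note the $h^2$-terms cancel, leaving a $\tau^2$-term plus genuinely quartic corrections. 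Then $\mu_k = \tfrac2\tau\arcsin\tfrac{\tau\varphi_k}2$, and using $\arcsin z = z + \tfrac16 z^3 + \tfrac3{40}z^5 + \mathcal O(z^7)$ with $z=\tfrac{\tau\varphi_k}2$ (which is $\le \tfrac1{\sqrt{1+\varepsilon_1}}<1$ by \eqref{fi k bound}, so the series converges and the remainder is genuinely $\mathcal O(z^7)=\mathcal O((k\tau)^7) = \mathcal O(k^7h^6)\cdot kh$-type — here one uses $k\tau\le kh$ and $kh\le k h$, more precisely $(k\tau)^7\le (kh)^6 (k\tau)$ and $k\tau\lesssim 1$ is false in general but $k\le N-1$ so $kh<X=\pi$, giving $(k\tau)^7 \le \pi^6 (k\tau) \lesssim k h^6\cdot\pi^6$, hmm — I'd actually bound the $\mathcal O(k^7h^6)$ remainder using $kh\le\pi$ throughout). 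The arithmetic miracle is that when one assembles $z + \tfrac16 z^3$ the quadratic-in-$(kh)$ corrections to the cubic term combine with the quintic expansion of $\sqrt{\lambda_k}$ so that the net coefficient of $k^5$ is exactly $-\tfrac1{480}(h^4-\tau^4) = -\nu_{\*h}$, with all lower-order ($k^3$) corrections cancelling. The stated two-sided bound $\tfrac{\varepsilon_0^2}2 h^4\le 480\nu_{\*h}\le h^4$ is then immediate from $0<\tau^2\le(1-\tfrac{\varepsilon_0^2}2)h^2$, hence $\tau^4\le(1-\tfrac{\varepsilon_0^2}2)^2 h^4\le(1-\tfrac{\varepsilon_0^2}2)h^4$, so $h^4-\tau^4\ge\tfrac{\varepsilon_0^2}2 h^4$, and trivially $h^4-\tau^4\le h^4$.

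For \eqref{4.11} I would argue more crudely since only an $\mathcal O((kh)^2)$ bound is claimed. From \eqref{4.5}, $\gamma_{1k} = \tfrac2{k\tau}\tan\tfrac{\mu_k\tau}2$; since $\mu_k\tau = \tau\varphi_k\cdot\tfrac{\mu_k\tau}{2}/\arcsin\tfrac{\tau\varphi_k}{2}\cdot$(const) — more simply, $\sin\tfrac{\mu_k\tau}2 = \tfrac{\tau\varphi_k}2$ by definition of $\mu_k$, so $\tan\tfrac{\mu_k\tau}2 = \tfrac{\tau\varphi_k/2}{\sqrt{1-(\tau\varphi_k/2)^2}}$, giving $\gamma_{1k} = \tfrac{\varphi_k}{k}\bigl(1-(\tau\varphi_k/2)^2\bigr)^{-1/2}$. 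By \eqref{fi k bound}, $(\tau\varphi_k/2)^2\le\tfrac1{1+\varepsilon_1}$ so the square root is bounded away from $0$, and $(\tau\varphi_k/2)^2 = \mathcal O((k\tau)^2) = \mathcal O((kh)^2)$, while $\tfrac{\varphi_k}k = 1+\mathcal O((kh)^2)$ from the expansion of $\varphi_k$ above; multiplying gives $\gamma_{1k}=1+\mathcal O((kh)^2)$. For $\hat\gamma_{1k} = a_{1k}\tfrac{2k}{\lambda_k\tau}\tan\tfrac{\mu_k\tau}2 = a_{1k}\tfrac{k^2}{\lambda_k}\gamma_{1k}$, one uses $\tfrac{k^2}{\lambda_k} = 1+\mathcal O((kh)^2)$ (again from $\lambda_k=k^2+\mathcal O(k^4h^2)$ and $\lambda_k$ bounded below by a constant times $k^2$, valid since $kh<\pi$), together with the hypothesis $a_{1k}=1+\mathcal O((kh)^2)$; the three factors multiply to $1+\mathcal O((kh)^2)$. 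Finally I would verify that each of the three explicit choices in \eqref{a1 for u1h} indeed has the form $1+\mathcal O((kh)^2)$: for the first, $1-\tfrac{h^2+\tau^2}{12}\lambda_k = 1 - \tfrac{h^2+\tau^2}{12}k^2 + \mathcal O(k^4h^4) = 1+\mathcal O((kh)^2)$ since $\tau\le h$; for the second and third, $\tfrac{\lambda_k}{k^2}(1-\tfrac{\tau^2 k^2}{12})$ and $\tfrac{\lambda_k}{k^2}(1-\tfrac{\tau^2\lambda_k}{12})$ are each $\bigl(1+\mathcal O((kh)^2)\bigr)\bigl(1+\mathcal O((k\tau)^2)\bigr) = 1+\mathcal O((kh)^2)$. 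The main obstacle is purely bookkeeping: getting the $k^5$ coefficient of $\mu_k$ exactly right, i.e., tracking the quintic terms through the composition of $\sqrt{\lambda_k}$, the $(1+\tfrac{\tau^2-h^2}{12}\lambda_k)^{-1/2}$ factor, and the $\arcsin$ series, and confirming that everything below order $k^5$ in the correction cancels so that the leading error is genuinely $-k^5\nu_{\*h}$ with a clean $\mathcal O(k^7h^6)$ tail; I would organize this as a single substitution $p:=(kh)^2$, $q:=(k\tau)^2$ with $q\le(1-\tfrac{\varepsilon_0^2}2)p$, expand to order $p^2$ (equivalently $k^4h^4$) inside the bracket, and read off $\mu_k/k - 1 = -\tfrac1{480}k^4(h^4-\tau^4)+\mathcal O(k^6h^6)$.
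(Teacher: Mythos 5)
Your proposal is correct and follows essentially the same route as the paper: expand $\sqrt{\lambda_k}$, the factor $\bigl(1+\tfrac{\tau^2-h^2}{12}\lambda_k\bigr)^{-1/2}$ and the $\arcsin$ series, track the cancellation of the $k^3$-order corrections, and read off the $k^5$ coefficient $-\tfrac{1}{480}(h^4-\tau^4)$, with \eqref{4.11} and the bounds on $\nu_{\*h}$ obtained exactly as in the paper (your identity $\sin\tfrac{\mu_k\tau}{2}=\tfrac{\tau\varphi_k}{2}$ is just a slightly more explicit way to handle the $\tan$ factor). The remaining work is the quartic bookkeeping you already describe, which the paper carries out explicitly with the substitution $\hat h=h/2$, $\hat\tau=\tau/2$.
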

\begin{proof}
The relation $\mu_k \asymp k$ together with 
$\mu_k\frac{\tau}{2}\leq\frac{\pi}{2}\frac{\tau\varphi_k}{2}\leq\frac{\pi}{2\sqrt{1+\ve_1}}$
follow from inequalities \eqref{fi k bound} and the similar elementary relation $\sqrt{\lambda_k} \asymp k$.

\par We set $\hat{h}=\frac{h}{2}$ and $\hat{\tau}=\frac{\tau}{2}$. Clearly the expansion holds
\[
\sqrt{\lambda_k}=k\Big[1-\frac{1}{6}(k\hat{h})^2+\frac{1}{120}(k\hat{h})^4+{\mathcal O}\big((kh)^6\big)\Big]
\]
and therefore
$\frac{\lambda_k}{k^2}=1-\frac13(k\hat{h})^2
+{\mathcal O}\big((kh)^4\big)$ and
$\frac{\lambda_k^2}{k^4}=1+{\mathcal O}\big((kh)^2\big)$.
We also set $a:=\frac13(\frac{\hat{\tau}}{\hat{h}}-1)$. Then we can write
\begin{gather*}
\frac{\varphi_k}{k}=\Big(\frac{\lambda_k}{1+a\hat{h}^2\lambda_k}\Big)^{1/2}
=\frac{\sqrt{\lambda_k}}{k}\Big[1-\frac12a\frac{\lambda_k}{k^2}(k\hat{h})^2+\frac38a^2\frac{\lambda_k^2}{k^4}(k\hat{h})^4+{\mathcal O}\big((kh)^6\big)\Big]
\\
=\Big[1-\frac{1}{6}(k\hat{h})^2+\frac{1}{120}(k\hat{h})^4\Big]\Big[1-\frac{a}{2}(k\hat{h})^2+\Big(\frac{a}{6}+\frac38a^2\Big)(k\hat{h})^4\Big]+{\mathcal O}\big((kh)^6\big)
\\
=1-\Big(\frac16+\frac{a}{2}\Big)(k\hat{h})^2+\Big(\frac{1}{120}+\frac{a}{4}+\frac38 a^2\Big)(k\hat{h})^4+{\mathcal O}\big((kh)^6\big).
\end{gather*}
Thus we obtain the expansion
\begin{equation*}
\frac{\varphi_k}{k}=1-\frac{1}{6}(k\hat{\tau})^2+b(k\hat{h})^4+{\mathcal O}\big((kh)^6\big)\ \  \text{with}\ \ b:=\frac{1}{24}\frac{\tau^2}{h^2}-\frac{1}{30}.
\end{equation*}

\par Based on this expansion, further we derive
\begin{gather*}
\frac{\mu_k}{k}=\frac{1}{k\hat{\tau}}\arcsin\hat{\tau}\varphi_k=
\frac{\varphi_k}{k}+\frac16\Big(\frac{\varphi_k}{k}\Big)^3(k\hat{\tau})^2+
\frac{3}{40}\Big(\frac{\varphi_k}{k}\Big)^5(k\hat{\tau})^4+{\mathcal O}\big((k\hat{\tau})^6\big)
\nonumber\\
=1-\frac{1}{6}(k\hat{\tau})^2+b(k\hat{h})^4
+\frac16\big((k\hat{\tau})^2-3\frac16(k\hat{\tau})^4\big)+\frac{3}{40}(k\hat{\tau})^4
+{\mathcal O}\big((kh)^6\big)
\nonumber\\
=1+k^4\Big(\frac{1}{24}\hat{\tau}^4-\frac{1}{30}\hat{h}^4-\frac{1}{120}\hat{\tau}^4\Big)+{\mathcal O}\big((kh)^6\big)
=1+k^4\frac{1}{30}(\hat{\tau}^4-\hat{h}^4)+{\mathcal O}\big((kh)^6\big),
\end{gather*} 
and expansion \eqref{4.10} is proved.

\par Expansions \eqref{4.11} and $a_{1k}=1+{\mathcal O}\big((kh)^2\big)$ for $a_{1k}$ given by formulas \eqref{a1 for u1h} follow from formulas $\lambda_k=k^2(1+{\mathcal O}\big((kh)^2\big)$,  $\mu_k=k(1+{\mathcal O}\big((kh)^2\big)$ and the above inequality $\mu_k\frac{\tau}{2}\leq\frac{\pi}{2\sqrt{1+\ve_1}}$.
\end{proof}
\begin{corollary}
\label{coroll:2.1}
Let the stability condition \eqref{stab cond 1} be valid.
For any $\alpha>0$ independent of $\*h$, there exists an integer $1\leq k_{\*h}\leq N-1$ such that $k_{\*h}\asymp h^{-4/5}$ and
\[
\mu_{k_{\*h}}=k_{\*h}-\alpha+
{\mathcal O}\big(h^{2/5}\big).
\]
\end{corollary}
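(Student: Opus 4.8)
The plan is to read off $k_{\*h}$ directly from the asymptotic expansion \eqref{4.10} of Lemma \ref{lemma52}. Rewriting \eqref{4.10} as $\mu_k-k=-k^5\nu_{\*h}+\mathcal{O}(k^7h^6)$, one sees that to force the leading term to equal $-\alpha$ it is natural to take $k$ near the real number $k_*:=(\alpha/\nu_{\*h})^{1/5}$. The two-sided bound $\tfrac{\ve_0^2}{2}h^4\le 480\nu_{\*h}\le h^4$ from \eqref{4.10} (itself a consequence of the stability condition \eqref{stab cond 1}) gives $\nu_{\*h}\asymp h^4$ with constants depending only on $\ve_0$, hence $k_*\asymp\alpha^{1/5}h^{-4/5}\asymp h^{-4/5}$. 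I would then set $k_{\*h}:=\lfloor k_*\rfloor$; this integer also satisfies $k_{\*h}\asymp h^{-4/5}$, and since $N=\pi/h$ obeys $h^{-4/5}=o(h^{-1})$ while $k_*\to\infty$, the required inclusion $1\le k_{\*h}\le N-1$ holds as soon as $h$ is small enough, which is the regime of interest.

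It then remains to estimate the error. First I would write $k_{\*h}^5\nu_{\*h}=\alpha+\nu_{\*h}(k_{\*h}^5-k_*^5)$ and, using $|k_{\*h}-k_*|\le 1$ together with the elementary bound $|k_{\*h}^5-k_*^5|\le 5(k_*+1)^4\asymp h^{-16/5}$, conclude that $\nu_{\*h}|k_{\*h}^5-k_*^5|\asymp h^4\cdot h^{-16/5}=h^{4/5}$. Second, I would bound the Taylor remainder in \eqref{4.10} at $k=k_{\*h}$ by $k_{\*h}^7h^6\asymp h^{-28/5}h^6=h^{2/5}$. Substituting both estimates into \eqref{4.10} with $k=k_{\*h}$ gives $\mu_{k_{\*h}}=k_{\*h}-\alpha+\mathcal{O}(h^{4/5})+\mathcal{O}(h^{2/5})=k_{\*h}-\alpha+\mathcal{O}(h^{2/5})$, which is exactly the claim.

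There is no genuine obstacle here; the only points needing a little care are (i) keeping the two error sources separate — the rounding error $\mathcal{O}(h^{4/5})$ from replacing $k_*$ by an integer, and the larger expansion remainder $\mathcal{O}(h^{2/5})$ from \eqref{4.10} itself — and noting that the former is absorbed into the latter, and (ii) verifying that $k_{\*h}$ falls in the admissible index range $\{1,\dots,N-1\}$, which follows from the scaling $k_{\*h}\asymp h^{-4/5}=o(h^{-1})$.
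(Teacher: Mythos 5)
Your proposal is correct and follows essentially the same route as the paper: the paper also sets $\rho_{\*h}=(\alpha/\nu_{\*h})^{1/5}$, takes the neighbouring integer (it uses $[\rho_{\*h}]+1$ rather than the floor, an immaterial difference), and splits the error into the rounding contribution $\mathcal{O}(\rho_{\*h}^4\nu_{\*h})=\mathcal{O}(h^{4/5})$ plus the expansion remainder $\mathcal{O}(k_{\*h}^7h^6)=\mathcal{O}(h^{2/5})$. The only cosmetic slip is writing $\asymp$ for what is really an upper bound $\mathcal{O}(h^{4/5})$ on the rounding term.
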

\begin{proof}
We define 
$k_{\*h}=[\rho_{\*h}]+1$, where $\rho_{\*h}=\big(\frac{\alpha}{\nu_{\*h}}\big)^{1/5}$ and $[\rho_{\*h}]$ is its integer part.
For $h$ so small that
$(\rho_{\*h}+2)h\leq \pi$ 
(without loss of generality), we get
$1\leq k_{\*h}\leq N-1$ and
$0\leq k_{\*h}-\rho_{\*h}\leq 1$.  
Using relations \eqref{4.10}, we obtain $k_{\*h}\asymp\rho_{\*h}\asymp h^{-4/5}$ and
\[
\mu_{k_{\*h}}-(k_{\*h}-\alpha)=(\rho_{\*h}^5-k_{\*h}^5)\nu_{\*h}+{\mathcal O}(k_{\*h}^7h^6)
={\mathcal O}(\rho_{\*h}^4\nu_{\*h})+{\mathcal O}(k_{\*h}^7h^6)
={\mathcal O}\big(h^{2/5}\big)
\]
that proves the result.
\end{proof}

\par Let $x_{i-1/2}=(i-0.5)h$ and $w_{i-1/2}=w(x_{i-1/2})$, $1\leq i\leq N$. We define the mesh $L^1$ norms
\begin{gather*}
\|w\|_{L_h^1(I_\pi)}=\sum_{i=1}^N\tfrac12(|w_{i-1}|+|w_i|)h,\ \ \|w\|_{L_{h*}^1(I_\pi)}=\sum_{i=1}^N|w_{i-1/2}|h,\ \ \|\bar{\delta}_xw\|_{L_h^1(I_\pi)}=\sum_{i=1}^N|\delta_xw_i|h,
\\ 
\|y\|_{L_\tau^1(I_T)}=\sum_{j=1}^M\tfrac12(|y_{j-1}|+|y_j|)\tau,\ \ \|v\|_{L_{\*h}^1(Q_T)}=\|\|v\|_{L_h^1(I_\pi)}\|_{L_\tau^1(I_T)}.
\end{gather*}
For $w\in W^{1,1}(I_\pi)$ and $y\in W^{1,1}(I_T)$, the simple bounds for differences between the continuous and mesh $L^1$ norms hold
\begin{gather}
\max\{\big|\|w\|_{L^1(I_\pi)}-\|w\|_{L_h^1(I_\pi)}\big|,   \big|\|w\|_{L^1(I_\pi)}-\|w\|_{L_{h*}^1(I_\pi)}\big|
\leq \|w'\|_{L^1(I_\pi)}h,
\label{bound for diff of norms}
\\
\big|\|y\|_{L^1(I_T)}-\|y\|_{L_\tau^1(I_T)}\big|
\leq \|y'\|_{L^1(I_T)}\tau.
\label{bound for diff of norms 2}
\end{gather}

\par For any $a>0$ and $y\in W^{1,1}(I_T)$, the estimate holds 
\begin{equation}
\Big|\int_{0}^T |y(t)\sin at|\,dt-\frac{2}{\pi}\int_{0}^T |y(t)|\,dt\Big| \leq \Big(\|y'\|_{L^1(I_T)}+
\frac32\big(1+\frac{\pi}{2}\big)\|y\|_{L^\infty(I_T)}\Big)\frac{2}{a},
\label{4.14}
\end{equation}
and it remains valid for $\sin at$ replaced with $\cos at$ (that is used below), see \cite{Z25}.

\par Below we use the collections of the harmonic data 
\[
\*d_k^{(0)}=(\sin kx,0,0),\ 
\*d_k^{(1)}=(0,\sin kx,0),\ 
\*d_k^{(2)}=(0,0,(\sin kx)\sin(k-1)t).
\]
Let $(u-v_{\*h})[\*d]$ be the compact scheme error for a given $\*d$.

\par The proof of the lower error bounds is based on the following asymptotic behavior of the error norms for these specific data.
\begin{theorem}
\label{theo:4.1} 
Let $l=0,1$ and $j=0,1,2$. 
For some $k=k_{\*h}\asymp h^{-4/5}$, the following formula for the norms of the compact scheme error holds
\begin{gather}
\|\bar{\delta}_x^l(u-v_{\*h})[\*d_k^{(j)}]\|_{L_{\*h}^1(Q_T)}=k^{-p_j+l}\Big(\dfrac{4}{\pi}c_j(T)+{\mathcal O}(h^{1/5})\Big),
\label{4.15b}
\end{gather}
with $p_0=0$ and $p_1=p_2=1$.
Also $c_0(T)=c_1(T)=2(2K_T+1-\cos(T-K_T\pi))$, where $K_T$ is the integer part of $\frac{T}{\pi}$, and $c_2(T)=T-\sin T$.
\end{theorem}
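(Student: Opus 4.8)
The plan is to reduce everything to a single harmonic computation: fix $j\in\{0,1,2\}$, apply the Fourier formula \eqref{4.1} for the exact solution and the formula \eqref{4.2} from Lemma \ref{lemma41} for the compact-scheme solution with the data $\*d_k^{(j)}$, and subtract. In each case the error $(u-v_{\*h})[\*d_k^{(j)}]$ factors as $\rho_k(t)\sin kx$ on $\bar\omega_{\*h}$ for an explicit function $\rho_k$ of $t$: for $j=0$ one gets $\rho_k(t)=\cos kt-\cos\mu_kt$; for $j=1$, $\rho_k(t)=\frac{1}{k}\sin kt-\frac{\hat\gamma_{1k}}{k}\sin\mu_kt$; for $j=2$, a difference of the Duhamel-type integrals in \eqref{4.1} and \eqref{4.2} with $g(t)=\sin(k-1)t$. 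Applying $\bar\delta_x^l$ in $x$ multiplies the spatial factor by a bounded quantity (for $l=1$, $\bar\delta_x\sin kx$ has mesh $L^1_{h*}$-norm over $I_\pi$ equal to $\frac{4}{\pi}+\mathcal O(h)$, using \eqref{bound for diff of norms} and $\|\,\partial_x\sin kx\|_{L^1}=k\cdot\frac4\pi\cdot\frac1k\cdot k=\dots$, i.e.\ the factor $k^l$ with constant $\frac4\pi$ if $l=1$ and constant coming from $\|\sin kx\|_{L^1(I_\pi)}$ if $l=0$). So \eqref{4.15b} separates into a spatial constant ($\frac4\pi$, up to $\mathcal O(h)$, from the mesh vs.\ continuous $L^1$ comparison) times a temporal integral $\int_0^T|\rho_k(t)|\,dt$ (up to $\mathcal O(\tau)$, again by \eqref{bound for diff of norms 2}, since $\rho_k$ is piecewise smooth in $t$ with bounded derivative when normalized correctly).

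The heart of the matter is the temporal integral. Here I would choose $k=k_{\*h}$ as in Corollary \ref{coroll:2.1}: for $j=0,1$ take $\alpha$ so that $\mu_k=k-\alpha+\mathcal O(h^{2/5})$ with a suitable phase shift making $\sin\mu_kt$ and $\sin kt$ beat against each other, and for $j=2$ the natural choice tied to the forcing frequency $k-1$. Then $\rho_k(t)$ becomes, to leading order, a product of a slowly varying envelope and a fast oscillation $\sin kt$ or $\cos kt$; concretely, for $j=0$, $\cos kt-\cos\mu_kt = 2\sin\frac{(k+\mu_k)t}{2}\sin\frac{(k-\mu_k)t}{2}$, and with $\mu_k=k-\alpha+o(1)$ and the right $\alpha$ this is (up to lower order) $2\sin(kt)\sin\frac{\alpha t}{2}$ type expression — a bounded smooth envelope times a fast sine. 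Integrating $|{\cdot}|$ and invoking the averaging estimate \eqref{4.14} (and its cosine version), the fast oscillation averages to its mean $\frac2\pi$, producing $\frac2\pi\int_0^T(\text{envelope})\,dt$ with error $\mathcal O(1/k)=\mathcal O(h^{4/5})$. Evaluating the envelope integrals gives precisely $c_0(T)=c_1(T)=2(2K_T+1-\cos(T-K_T\pi))$ and $c_2(T)=T-\sin T$; the piecewise structure (the $K_T$, the $\cos(T-K_T\pi)$) comes from $\int_0^T|\sin(\cdot)\cdot(\text{linear or trig envelope})|$ where the sign changes of the envelope at multiples of $\pi$ must be tracked. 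Collecting the spatial constant $\frac4\pi$, the factor $k^{-p_j+l}$ (with $p_0=0$, $p_1=p_2=1$ from the $\frac1k$ in \eqref{4.1}), and $\frac2\pi\cdot c_j(T)$, and merging all error terms $\mathcal O(h)$, $\mathcal O(\tau)$, $\mathcal O(h^{2/5})$, $\mathcal O(h^{4/5})$ into a single $\mathcal O(h^{1/5})$, yields \eqref{4.15b}.

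Two points need care. First, one must verify that replacing $\mu_k$ by its asymptotic value in the envelope, and $\hat\gamma_{1k},\gamma_{1k}$ by $1$ (valid since $(k_{\*h}h)^2\asymp h^{2/5}$ by Lemma \ref{lemma52}), and $\hat s_\theta\sin\mu_k(t-\theta)$ by $\sin\mu_k(t-\theta)$ (the interpolation error is $\mathcal O(\tau^2\mu_k^2)=\mathcal O(h^{2/5})$ uniformly), all contribute only to the $\mathcal O(h^{1/5})$ remainder after integration over $t\in(0,T)$ of length $\mathcal O(1)$; this is where I expect the bookkeeping to be heaviest, especially for $j=2$ where the Duhamel integral must be computed explicitly as $\frac1k\int_0^t\sin(k-1)\theta\,\sin\mu_k(t-\theta)\,d\theta$ and its compact-scheme analogue, and the two combined into an envelope-times-fast-oscillation form. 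Second, the mesh-to-continuum norm comparisons \eqref{bound for diff of norms}, \eqref{bound for diff of norms 2} require the relevant functions to lie in $W^{1,1}$; since the spatial factor is $\sin kx$ (smooth) and the temporal factor $\rho_k(t)$ is continuous and piecewise $C^1$ with derivative bounded by $\mathcal O(k)$ times the amplitude, and $\rho_k$ itself has amplitude $\mathcal O(k^{-p_j})$, the products $k^{-p_j+l}\cdot(\text{bounded})$ survive with $h$-errors $\mathcal O(k^{-p_j+l}\cdot kh)=\mathcal O(k^{-p_j+l}h^{1/5})$, consistent with the claimed form. The main obstacle is thus not any single estimate but the disciplined tracking of which terms land in the leading constant $\frac4\pi c_j(T)$ versus the error, and correctly computing the three envelope integrals that produce $c_0,c_1,c_2$.
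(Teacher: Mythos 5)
Your proposal follows essentially the same route as the paper: factor the error as a temporal amplitude times $\sin kx$ via \eqref{4.1} and \eqref{4.2}, choose $k=k_{\*h}$ from Corollary \ref{coroll:2.1} (the paper takes $\alpha=2$, so that $\mu_k=k-2+{\mathcal O}(h^{2/5})$ and the product-to-sum identities give the exact envelopes $2\sin t$, resp. $1-\cos t$, times a fast factor at frequency $k-1$), push $|\mu_k-(k-2)|$, $|\hat{\gamma}_{1k}-1|$, $|\gamma_{1k}-1|$ and the $\hat{s}_\theta$-interpolation error ${\mathcal O}(\mu_k^2\tau^2)$ into an ${\mathcal O}(h^{2/5})$ remainder, and then evaluate the $L^1$ norms by \eqref{bound for diff of norms}, \eqref{bound for diff of norms 2} and the averaging estimate \eqref{4.14}. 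The one item to correct is the assembly of the constant: the spatial mesh norms are $\|\sin kx\|_{L_h^1(I_\pi)}=2+{\mathcal O}(kh)$ and $\|\bar{\delta}_x\sin kx\|_{L_h^1(I_\pi)}=k\,(2+{\mathcal O}(kh))$, not $\tfrac{4}{\pi}+{\mathcal O}(h)$; the $\tfrac{4}{\pi}$ in \eqref{4.15b} arises as the product of this spatial constant $2$ with the averaging constant $\tfrac{2}{\pi}$ coming from \eqref{4.14} applied to the envelope, so your final step of ``collecting the spatial constant $\tfrac{4}{\pi}$ \dots and $\tfrac{2}{\pi}c_j(T)$'' double-counts and would yield $\tfrac{8}{\pi^2}c_j(T)$ instead of $\tfrac{4}{\pi}c_j(T)$. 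With that bookkeeping fixed, your argument coincides with the paper's proof.
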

\begin{proof}
Let $l=0,1$ and
$(x,t)\in\bar{\omega}_{\*h}$ excluding $x=0$ for $l=1$. We use $\alpha=2$ in Corollary \ref{coroll:2.1}.
\par 1. First we consider the cases $j=0,1$.
Due to the Fourier-type representations of solutions  \eqref{4.1}--\eqref{4.2}, we obtain the formulas for the error
\begin{gather}
\bar{\delta}_x^l(u-v_{\*h})[\*d_k^{(0)}](x,t)=
[\cos kt-\cos(k-2)t+r_k^{(0)}(t)]\,\bar{\delta}_x^l\sin kx,
\label{4.16}\\
\bar{\delta}_x^l(u-v_{\*h})[\*d_k^{(1)}](x,t)=\dfrac{1}{k}[\sin kt-\sin(k-2)t+r_k^{(1)}(t)]\,\bar{\delta}_x^l\sin kx.
\label{4.16_1}
\end{gather}
Here clearly we have
\begin{gather}
\cos kt-\cos(k-2)t=-2(\sin t)\sin(k-1)t,\ \
\sin kt-\sin(k-2)t=2(\sin t)\cos(k-1)t.
\label{trig form 1}
\end{gather}
The remainders are
\[
r_k^{(0)}(t)=\cos(k-2)t-\cos\mu_kt,\ \
r_k^{(1)}(t)=\sin(k-2)t-\sin\mu_kt
-(\hat{\gamma}_{1k}-1)\sin \mu_k t,
\]
and they satisfy the bounds
\begin{gather*}
\|r_k^{(0)}\|_{C(\bar{I}_T)}\leq |\mu_k-(k-2)|,\ \ 
\|r_k^{(1)}\|_{C(\bar{I}_T)}\leq |\mu_k-(k-2)|+|\hat{\gamma}_{1k}-1|.
\end{gather*}

\par Moreover, due to estimates \eqref{bound for diff of norms}, \eqref{bound for diff of norms 2} and \eqref{4.14}, we obtain
\begin{gather}
\|\sin kx\|_{L_h^1(I_\pi)}
=\|\sin kx\|_{L^1(I_\pi)}+{\mathcal O}(kh)
=2+{\mathcal O}(kh),
\label{sin mesh norm}\\
\|\bar{\delta}_x\sin kx\|_{L_h^1(I_\pi)}
=\sqrt{\lambda_k}\|\cos kx\|_{L_{h*}^1(I_\pi)}
=k(\|\cos kx\|_{L^1(I_\pi)}+{\mathcal O}(kh))
=k(2+{\mathcal O}(kh)),
\label{diff sin mesh norm}\\
\|2(\sin t)\sin (k-1)t\|_{L_\tau^1(I_T)}
=\|2(\sin t)\sin (k-1)t\|_{L^1(I_T)}+{\mathcal O}(k\tau)
\nonumber\\
=\frac{4}{\pi}\|\sin t\|_{L^1(I_T)}+{\mathcal O}(k\tau+k^{-1})
=\frac{2}{\pi}c_0(T)+{\mathcal O}(k\tau+k^{-1})\ \ \text{for}\ \ k\geq 2,
\label{mult sin mesh norm}
\end{gather}
and the last formula remains valid for $\sin (k-1)t$ replaced with $\cos (k-1)t$.

\smallskip\par 2. To consider the last case $j=2$, we define the functions
\begin{gather}
y^{(\vk)}(t)
\nonumber\\
:=\int_0^t (\sin(k-1) \theta) \sin \vk(t-\theta)\,d\theta=-\dfrac{1}{2}\dfrac{\sin(k-1)t-\sin\vk t}{(k-1)-\vk}+\dfrac{1}{2}\dfrac{\sin(k-1)t+\sin \vk t}{(k-1)+\vk},
\label{ya of t}\\
y_\vk(t):=\int_0^t (\sin (k-1)\theta)\hat{s}_\theta\sin \vk(t-\theta)\,d\theta,
\nonumber
\end{gather}
with the parameter $\vk$ such that $|\vk|\neq k-1$.

\par Once again due to the Fourier-type representations of solutions  \eqref{4.1}--\eqref{4.2}, we can write the formula for the error
\begin{equation}
\label{4.18}
\bar{\delta}_x^l(u-v_{\*h})[\*d_k^{(2)}](x,t)=\frac{1}{k}\big[(y^{(k)}-y^{(k-2)})(t)+r_k^{(2)}(t)\big]\,\bar{\delta}_x^l\sin kx.
\end{equation}
Here according to formula \eqref{ya of t}, we get
\begin{gather*}
 (y^{(k)}-y^{(k-2)})(t)=-\frac12[\sin kt-2\sin(k-1)t+\sin(k-2)t]+\frac{\theta_k}{k}=(1-\cos t)\sin(k-1)t+\frac{\theta_k}{k}
\end{gather*}
with some $\theta_k\in [-1,1]$.
The reminder is
\[
r_k^{(2)}(t):=(y^{(k-2)}-y^{(\mu_k)})(t)+(y^{(\mu_k)}-y_{\mu_k})(t)-(\gamma_{1k}-1)y_{\mu_k}(t),
\]
and it obeys the bound
\begin{gather*}
\|r_k^{(2)}\|_{C(\bar{I}_T)}\leq T|\mu_k-(k-2)|
+\max_{0\leq t\leq T}\int_0^t|\sin\mu_k(t-\theta)-\hat{s}_\theta\sin\mu_k(t-\theta)|\,d\theta+T|\gamma_{1k}-1|
\\
\leq T\big(|\mu_k-(k-2)|+\mu_k^2\tau^2+|\gamma_{1k}-1|\big),
\end{gather*}
where the elementary bound for the error of the linear interpolation has been applied.

\smallskip\par 3. Now we choose $k=k_{\*h}\asymp h^{-4/5}$ according to Corollary \ref{coroll:2.1} for $\alpha=2$. Then, for $j=0,2$, using the above bounds for the reminders, expansions \eqref {4.11} and Corollary \ref{coroll:2.1}, we obtain
\begin{equation*}
\big|\bar{\delta}_x^l(u-v_{\*h})[\*d_{k_{\*h}}^{(j)}](x,t)\big|=k_{\*h}^{-p_j}\big(|\zeta_j(t)\sin (k_{\*h}-1)t||\bar{\delta}_x^l\sin k_{\*h}x|+{\mathcal O}(h^{2/5})\big)
\end{equation*}
where $\zeta_0(t)=2\sin t$, $\zeta_2(t)=1-\cos t$ and ${\mathcal O}$-term is independent of $(x,t)$.
For $j=1$, the same formula with $\sin (k_{\*h}-1)t$ replaced with $\cos (k_{\*h}-1)t$ is valid.

\par To derive formula \eqref{4.15b}, it remains to apply formulas \eqref{sin mesh norm}--\eqref{mult sin mesh norm} and the formula similar to \eqref{mult sin mesh norm} with $2\sin t$ and $c_0(T)$ replaced with $1-\cos t$ and $c_1(T)$.
\end{proof}

\par Now we are ready to pass to the lower error bounds. For real $\lambda\geq 0$, let $C^{\lambda}(\bar{I}_\pi)$ and $C^{\lambda}(\bar{Q}_T)$ be the H\"{o}lder spaces of functions defined on $\bar{I}_\pi$ and $\bar{Q}_T$, for example, see \cite{LU}.
Recall that, for integer $\ell=\lambda$, they consist of functions continuous, for $\ell=0$, and $\ell$ times continuously differentiable, for $\ell\geq 1$, in $\bar{I}_\pi$ and $\bar{Q}_T$, respectively.
Denote by $C_{(0)}^{\lambda}(\bar{I}_\pi)$ and $C_{(0)}^{\lambda}(\bar{Q}_T)$ their subspaces (equipped with the same norms) containing functions such that $\partial_x^{2k}w(x)|_{x=0,\pi}=0$ and 
$\partial_x^{2k}f(x,t)|_{x=0,\pi}=0$ together with $\partial_t^{2k}f(x,t)|_{t=0}=0$, respectively, for $0\leq 2k\leq[\lambda]$;
note that here only the derivatives of the even order are involved. 
\begin{theorem}
\label{theo: low err est f}
Let the stability condition \eqref{stab cond 1} hold and $l=0,1$. There exist $h_0>0$ and $c_1>0$ such that, for $h\leq h_0$, the following lower error bounds with respect to 
$u_0$, $u_1$ and $f$ hold
\begin{gather}
\sup_{u_j(x)=\sin kx,\, k\in\mathbb{N}}\frac{\|\bar{\delta}_x^l(s_xu-v_{\*h})[\*d^{(j)}]\|_{L_{\*h}^1(Q_T)}}
{\|u_j\|_{C_{(0)}^{\lambda-j}(\bar{I}_\pi)}}\geq c_1h^{4(\lambda-l)/5},\ \ j=0,1,
\label{low err 1}\\ 
\sup_{f(x,t)=(\sin kx)\sin (k-1)t,\, k\in\mathbb{N},\,k\geq 2}\frac{\|\bar{\delta}_x^l(s_xu-v_{\*h})[\*d^{(2)}]\|_{L_{\*h}^1(Q_T)}}
{\|f\|_{C_{(0)}^{\lambda-1}(\bar{Q}_T)}}\geq c_1h^{4(\lambda-l)/5},
\label{low err 3}
\end{gather}
for any $l\leq\lambda\leq 5+l$ such that, for $l=0$, $\lambda\geq 1$  in \eqref{low err 1} for $j=1$ and  in 
\eqref{low err 3}.
\end{theorem}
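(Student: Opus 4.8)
The plan is to derive the lower bounds from the precise asymptotic formula \eqref{4.15b} of Theorem \ref{theo:4.1} by choosing, for each mesh step $h$, the single harmonic index $k=k_{\*h}\asymp h^{-4/5}$ supplied there, and then estimating the denominators $\|u_j\|_{C_{(0)}^{\lambda-j}(\bar I_\pi)}$ and $\|f\|_{C_{(0)}^{\lambda-1}(\bar Q_T)}$ from above in terms of $k$. First I would observe that the data $\*d_k^{(j)}$ used in Theorem \ref{theo:4.1} are exactly the ones appearing in the suprema here, and that they lie in the claimed H\"{o}lder subspaces: $\sin kx$ and its even-order $x$-derivatives vanish at $x=0,\pi$, and for $f=(\sin kx)\sin(k-1)t$ the even-order $t$-derivatives vanish at $t=0$ as well, so the membership in $C_{(0)}^{\lambda-j}$ and $C_{(0)}^{\lambda-1}$ is legitimate for $0\leq\lambda-j\leq 5$, i.e. $l\leq\lambda\leq 5+l$ after accounting for the gain from $\bar\delta_x^l$.

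Next I would compute the H\"{o}lder norms of the harmonic data. For $u_j(x)=\sin kx$ one has $\|u_j\|_{C^{\lambda-j}(\bar I_\pi)}\leq c\,k^{\lambda-j}$ for every real $\lambda-j\geq 0$ (for integer order this is the top derivative $k^{\lambda-j}\cos$ or $\sin$; for fractional order it follows from interpolating between consecutive integer orders, or directly from the mean value estimate for the H\"{o}lder seminorm of $k^{[\lambda-j]}$ times a trigonometric function). Likewise $\|f\|_{C^{\lambda-1}(\bar Q_T)}\leq c\,k^{\lambda-1}$, since each mixed derivative $\partial_x^p\partial_t^q(\sin kx)\sin(k-1)t$ is bounded by $c\,k^p(k-1)^q\leq c\,k^{p+q}$ and again fractional orders follow by interpolation; here I would use $k\geq 2$ so that $k-1\asymp k$. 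The factor $s_x$ appearing in the numerator (the operator $s_N$ in $x$, applied to $u$) is harmless: on the harmonic $\sin kx$ it multiplies by the eigenvalue-type factor $1-\tfrac{h^2}{12}\lambda_k$, which by \eqref{eig values} and $k\asymp h^{-4/5}$ equals $1+\mathcal O((kh)^2)=1+\mathcal O(h^{2/5})\to 1$, so $\bar\delta_x^l(s_xu-v_{\*h})[\*d_k^{(j)}]$ has the same leading asymptotics as $\bar\delta_x^l(u-v_{\*h})[\*d_k^{(j)}]$ up to a relative $\mathcal O(h^{2/5})$ correction, absorbed into the $\mathcal O(h^{1/5})$ remainder of \eqref{4.15b} (one should check the $s_x$ acts only on $u$, not on $v_{\*h}$, and that its action on the error is still controlled; alternatively note $s_x$ commutes with $\bar\delta_x$ and is a bounded multiplier uniformly in $h$ on these harmonics).

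Assembling the pieces: for the chosen $k=k_{\*h}$, the numerator in \eqref{low err 1} equals $k^{-p_j+l}\big(\tfrac4\pi c_j(T)+\mathcal O(h^{1/5})\big)$ with $c_j(T)>0$ a fixed positive constant (since $2K_T+1-\cos(\cdot)\geq 1$ and $T-\sin T>0$ for $T>0$), while the denominator is at most $c\,k^{\lambda-j}$. Hence the ratio is bounded below by $c_1 k^{-p_j+l-(\lambda-j)}$; using $p_0=0$, $j=0$ gives exponent $l-\lambda$, and $p_1=1$, $j=1$ gives exponent $-1+l-(\lambda-1)=l-\lambda$ as well, and since $k_{\*h}\asymp h^{-4/5}$ this is $c_1 h^{4(\lambda-l)/5}$ for all sufficiently small $h\leq h_0$; the same computation with $p_2=1$ and denominator $\leq c\,k^{\lambda-1}$ yields \eqref{low err 3}. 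The restriction $\lambda\geq 1$ for $l=0$ in \eqref{low err 1} with $j=1$ and in \eqref{low err 3} is exactly what guarantees that the H\"{o}lder order $\lambda-j\geq 0$ (resp. $\lambda-1\geq 0$) in the relevant denominator, so the norm is defined; the upper constraint $\lambda\leq 5+l$ is what keeps that order at most $5$, the regularity available from the data. The main obstacle, I expect, is the bookkeeping of the fractional H\"{o}lder norms of the harmonics (getting the clean bound $c\,k^{\lambda-j}$ uniformly and handling the cross-term $(k-1)^q$ in $t$), together with confirming that inserting $s_x$ and restricting to a single index $k=k_{\*h}$ inside the supremum genuinely reproduces the asymptotic constant from Theorem \ref{theo:4.1} rather than degrading it.
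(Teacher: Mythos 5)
Your proposal is correct and follows essentially the same route as the paper: it invokes the asymptotic formula of Theorem \ref{theo:4.1} for the special frequency $k_{\*h}\asymp h^{-4/5}$, bounds the H\"{o}lder norms of the harmonic data by $ck^{\lambda-j}$ (the paper's bound \eqref{bounds for sin}, cited there from \cite{Z25}), and divides, with the exponent bookkeeping $-p_j+l-(\lambda-j)=l-\lambda$ matching the paper's. Your explicit treatment of the averaging $s_x$ (a multiplier $1+\mathcal{O}((k h)^2)=1+\mathcal{O}(h^{2/5})$ on the harmonic, absorbed into the $\mathcal{O}(h^{1/5})$ remainder) is a detail the paper leaves implicit, but it does not change the argument.
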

\begin{proof}
For natural $k$, it is not difficult to check (for example, see \cite{Z25}) that the bounds hold
\begin{gather}
 \|\sin kx\|_{C_{(0)}^{\lambda}(\bar{I}_\pi)}\leq ck^\lambda,\ \ 
 \|(\sin kx)\sin (k-1)t\|_{C_{(0)}^{\lambda}(\bar{Q}_T)}\leq ck^\lambda\ \ \text{for}\ \ \lambda\geq 0.
\label{bounds for sin}
\end{gather}

\par Due to Theorem \ref{theo:4.1} and for  $k=k_{\*h}$ chosen in it, the lower bound holds
\begin{gather*}
\|\bar{\delta}_x^l(s_xu-v_{\*h})[\*d_{k_{\*h}}^{(j)}]\|_{L^1(Q_T)}\geq c_1k_{\*h}^{-p_j+l},\ \ j=0,1,2, \end{gather*}
for $h\leq h_0$ and some $c_1>0$, with $h_0>0$ small enough.
Combining it with bound \eqref{bounds for sin}, we get
\begin{gather*}
\frac{\|\bar{\delta}_x^l(s_xu-v_{\*h})[\*d_{k_{\*h}}^{(j)}]\|_{L_{\*h}^1(Q_T)}}
{\|\sin k_{\*h}x\|_{C_{(0)}^{\lambda-j}(\bar{I}_\pi)}}\geq\frac{c_1}{k_{\*h}^{\lambda-l}},\ \ j=0,1,  
\ \
\frac{\|\bar{\delta}_x^l(s_xu-v_{\*h})[\*d_{k_{\*h}}^{(2)}]\|_{L_{\*h}^1(Q_T)}}
{\|(\sin k_{\*h}x)\sin(k_{\*h}-1)t\|_{C_{(0)}^{\lambda-1}(\bar{Q}_T)}}\geq\frac{c_1}{k_{\*h}^{\lambda-l}} 
\end{gather*}
for any $l\leq\lambda\leq 5+l$ such that, for $l=0$, we assume that  $\lambda\geq 1$ in the former bound with $j=1$ and in the latter one.
Due to $k_{\*h}\asymp h^{-4/5}$, these bounds lead to bounds \eqref{low err 1}--\eqref{low err 3}.
\end{proof}

\par Note that actually bounds \eqref{low err 1}--\eqref{low err 3} are valid for any $l\leq\lambda\leq 5+l$ without exceptions, provided that the H\"{o}lder-type spaces of distributions $C_{(0)}^{\lambda}(\bar{I}_\pi)$ and $C_{(0)}^{\lambda}(\bar{Q}_T)$ for $-1\leq\lambda<0$ are suitably defined, see \cite{Z25}, but we do not dwell on that in the present paper.

\par Finally, the following imbeddings hold
\[
\|w\|_{H^{(\lambda)}}\leq c\|w\|_{C_{(0)}^{\lambda}(\bar{I}_\pi)},\ 0\leq\lambda\leq 6;\ \ \|f\|_{F^{\lambda}}\leq 
 c\|f\|_{C_{(0)}^{\lambda}(\bar{Q}_T)},\ 0\leq\lambda\leq 5,
\]
for any $w\in C_{(0)}^{\lambda}(\bar{I}_\pi)$, $f\in C_{(0)}^{\lambda}(\bar{Q}_T)$ and $X=\pi$.
The latter one follows from the elementary property that if the four Banach spaces $B_1\subset B_0$ and $\hat{B}_1\subset \hat{B}_0$ are such that $\hat{B}_i\subset B_i$, $i=0,1$, then $(\hat{B}_0,\hat{B}_1)_{\alpha,\infty}\subset(B_0,B_1)_{\alpha,\infty}$ for $0<\alpha<1$.
According to the lower bounds \eqref{low err 1}--\eqref{low err 3} for $l=1$, the error bound \eqref{gen err bound} is sharp in order for each $0\leq\lambda\leq 5$ with respect to each of the functions $u_0$, $u_1$ and $f$. Moreover, the error bound cannot be improved if the summability index in the error norm is weakened down to 1 both with respect to $x$ and $t$ and simultaneously the summability index in the norms of data is strengthened up to $\infty$ both with respect to $x$ and $t$. In addition, for $f$, passing from the dominating mixed smoothness to the standard one cannot improve the error orders as well. 

\section*{\normalsize Acknowledgments}

This study was supported by the RSF, project 23-21-00061 (Sections \ref{sec: ibvp and scheme}--\ref{sec:fract order bounds}) and the Basic Research Program at the HSE University (Section \ref{sec:lower bounds}).

\makeatletter
\renewcommand{\@biblabel}[1]{#1.\hfill}
\makeatother
\renewcommand{\refname}{\centerline{\normalsize\rm \textbf{Reference}s}}
{\small

}
\end{document}